\crefname{hypothesis}{Hypothesis}{Hypotheses}
\crefname{fact}{Fact}{Facts}
\theoremstyle{definition}
\newtheorem{example}{Example}
\theoremstyle{remark}
\newcommand{\R}{\mathbb R}
\newcommand{\bx}{\bar{x}}
\newcommand{\by}{\bar{y}}
\newcommand{\tu}{\tilde{u}}
\newcommand{\tv}{\tilde{v}}
\newcommand{\bv}{\bar{v}}
\newcommand{\bu}{\bar{u}}
\newcommand{\gu}{\texttt{Gurobi}}
\newcommand{\matlab}{\texttt{MATLAB}}
\newcommand{\CC}{\mathcal C}
\newcommand{\RR}{\mathbb R}
\newcommand{\Sim}{{\rm Dis}}
\newcommand{\dis}{{\rm dis}}
\newcommand{\argmax}{\mathop{\rm argmax}}
\newcommand{\argmin}{\mathop{\rm argmin}}
\def \n1{\texttt{np1}}
\def \np2{\texttt{np2}}
\newcommand{\inner}[2]{\langle#1,#2\rangle}
\newcommand{\ind}{\mathbf{i}}
\newcommand{\norm}[1]{\textstyle{\Vert} #1 \textstyle{\Vert}}
\newcommand{\dist}{\mathtt{d}}
\newcommand{\proj}{\mathtt{Proj}}
\newcommand{\Dis}{{\rm Dis}}
\title{Measuring dissimilarity between convex cones by means of max-min angles\thanks{Submitted to the editors September 2, 2025.
\funding{This work was funded by MATH-AMSUD 23-MATH-09 (AMSUD230018)  MORA-DataS project. D. Sossa was also supported by the Universidad de O'Higgins through grants MOVI2430 and PNTE2502.}}}
\author{Welington de Oliveira\thanks{Mines Paris, Universit\'e PSL, Centre de Math\'ematiques Appliqu\'ees (CMA),  Sophia Antipolis, France 
(\email{welington.oliveira@minesparis.psl.eu}, \email{valentina.sessa@minesparis.psl.eu}). }
\and Valentina Sessa\footnotemark[2]
\and David Sossa\thanks{Universidad de O'Higgins, Instituto de Ciencias de la Ingenier\'ia, Av.\,Libertador Bernardo O'Higgins 611, Rancagua, Chile 
  (\email{david.sossa@uoh.cl}).}
}
\begin{document}

\maketitle

\title{Measuring dissimilarity between convex cones by means of max-min angles}

\author{Welington de Oliveira$^\ast$ \and Valentina Sessa\footnote{ Mines Paris, Universit\'e PSL, Centre de Math\'ematiques Appliqu\'ees (CMA),  Sophia Antipolis, France (welington.oliveira@minesparis.psl.eu, valentina.sessa@minesparis.psl.eu).} \and David Sossa\footnote{Universidad de O'Higgins, Instituto de Ciencias de la Ingenier\'ia, Av.\,Libertador Bernardo O'Higgins 611, Rancagua, Chile (david.sossa@uoh.cl).}    } 

\date{\today}

\maketitle

\begin{abstract}
 This work introduces a novel dissimilarity measure between two convex cones, based on the max-min angle between them. We demonstrate that this measure is closely related to the Pompeiu-Hausdorff distance, a well-established metric for comparing compact sets.
Furthermore, we examine cone configurations where the measure admits simplified or analytic forms.
For the specific case of polyhedral cones, a nonconvex cutting-plane method is deployed to compute, at least approximately, the measure between them. Our approach builds on a tailored version of Kelley's cutting-plane algorithm, which involves solving a challenging master program per iteration.
When this master program is solved locally, our method yields an angle that satisfies certain necessary optimality conditions of the underlying nonconvex optimization problem yielding the dissimilarity measure between the cones. 
As an application of the proposed mathematical and algorithmic framework, we address the image-set classification task under limited data conditions, a task that falls within the scope of the \emph{Few-Shot Learning} paradigm. In this context, image sets belonging to the same class are modeled as polyhedral cones, and our dissimilarity measure proves useful for understanding whether two image sets belong to the same class.
\end{abstract}

\begin{keywords}
    convex cone, max-min angle, projection, Pompeiu-Hausdorff distance, cutting-plane, image-set classification
\end{keywords}

\begin{MSCcodes}
    52A40, 90C26, 90C46, 90C47, 49M37
\end{MSCcodes}

\section{Introduction}
Measuring the dissimilarity between two mathematical objects is a fundamental problem in mathematics and its applications. For instance, in data classification problems, one should have a mechanism to measure how alike two data objects are. In particular, in image-set classification problems, it is common to model a data image-set as an element of a Grassmann manifold (collection of all linear subspaces of fixed dimension). Thus, the problem of measuring the dissimilarity between two data image-sets can be approached by computing the distance between two linear subspaces. There are many ways of defining a distance on a Grassmann manifold, most of them are based on the principal angles between subspaces, like the Grassmann distance, projection distance, spectral distance, etc., see \cite{YeLim}. Principal angles are fundamental geometric tools for comparing two subspaces. Moreover, they can be efficiently computed since it reduces to the computation of the singular values of some matrix associated with the subspaces \cite{Miao}.

As pointed out in \cite{IS2010}, a fruitful extension of the concept of linear subspace is that of convex cone. Convex cones also play an ubiquitous role in many branches of applied mathematics. For instance, in \cite{Sogi}, it has been reported that some classes of data image-sets can be more accurately modeled by convex cones rather than linear subspaces. Thus, measuring the dissimilarity between convex cones is a relevant issue in classification tasks, for instance. For this purpose, many distances between convex cones are known in the literature. We refer to the work of Iusem and Seeger \cite{IS2010} for a survey on this topic.

Let $\CC_n$ denote the set of nonempty closed convex cones in $\RR^n$, and let $S_n:=\{u\in\mathbb R^n:\Vert u\Vert =1\}$ be the unit sphere in $\RR^n$. As happens with distances between linear subspaces, it is natural to ask whether there are distances on $\CC_n$ based on a concept of angles between cones. Iusem and Seeger \cite{IS1,IS2,IS3,IS4} have initiated the study of maximal angle on a cone. Later, Seeger and Sossa \cite{SeSo1,SeSo2} have extended this study for two cones as follows: Given $P,Q\in\CC_n$, the maximal angle between $P$ and $Q$ corresponds to the optimal value of the problem
\begin{equation}\label{maxangle}
\max_{u\in P\cap S_n,\,v\in Q\cap S_n}\arccos\,\langle u,v\rangle.
\end{equation}
Furthermore, the stationary values of the above problem give us the concept of critical angles between $P$ and $Q$, which is an extension of that of principal angles between subspaces. Indeed, when $P$ and $Q$ are linear subspaces, both concepts coincide. However, it is not clear to us how to define a distance from critical angles since there are some computing limitations for finding all critical angles, and because problem \eqref{maxangle} is NP-hard, see \cite{dOSS,BGS}. Even more, only the maximal angle will not be enough  to define a distance on $\CC_n$ since, in general, the maximal angle between two identical cones is nonzero. If we consider the minimization counterpart of \eqref{maxangle}, we get the concept of minimal angle between cones, but different cones with nonempty intersection have a minimal angle equal to zero; so, the minimal angle does not define a distance either.

To overcome the limitations of the maximal and minimal angles between cones for defining a distance on $\CC_n$, we introduce a new concept of angle: for $P,Q\in\CC_n$, the max-min angle between $P$ and $Q$ is defined as 
\begin{equation}\label{maxminPQ}
\Theta(P,Q):=\max_{u\in P\cap S_n}\min_{v\in Q\cap S_n}\arccos\,\inner{u}{v}.
\end{equation}

We shall see that $\Theta(P,Q)$ and $\Theta(Q,P)$ capture distinct dissimilarity information between $P$ and $Q$. Moreover, if $\Theta(P,Q)=\Theta(Q,P)=0$ then $P=Q$. Thus,  we shall prove that the following is a distance on $\CC_n$:
\[
\Sim_r(P,Q)=2\left\Vert\left(\sin\left[\frac{\Theta(P,Q)}{2}\right],\,\sin\left[\frac{\Theta(Q,P)}{2}\right]\right)\right\Vert_r,
\]
where $\Vert\cdot\Vert_r$ is the $\ell^r$-norm in $\RR^2$, for $1\leq r\leq\infty$. Our construction of such a distance is inspired by the Pompeiu-Hausdorff distance between compact sets.

Given $P,Q \in \CC_n$, for a practical computation of the max-min angle $\Theta(P,Q)$ it is convenient to compute $\cos[\Theta(P,Q)]$ which is the optimal value of the min-max problem:
\begin{equation}\label{eq:minmaxPQ}
\min_{u\in P\cap S_n}\max_{v\in Q\cap S_n}\langle u,v\rangle.
\end{equation}
Indeed, the above problem reduces to 
\[\min_{u\in P\cap S_n}F_Q(u) \quad \mbox{where} \quad F_Q(u):=\max_{v\in Q\cap S_n}\langle u, v\rangle
\]
is convex since it is the support function of $Q\cap S_n$. We show in Proposition~\ref{prop FQ} below that $F_Q(u)$ is related to the projection of $u$ onto the intersection of a cone and a sphere, whose computation can be performed by means of the tools developed by Bauschke, Bui and Wang \cite{Bauschke}. 
We shall see that the minimum of $F_Q$ over $P\cap S_n$ can be explicitly computed for some particular instances of cones like linear subspaces and revolution cones. For the more involving case of polyhedral cones, we tackle the problem with a different approach. A specialized nonconvex version of the well-known cutting-plane method~\cite{Kelley_1960} is implemented to globally solve the min-max problem~\eqref{eq:minmaxPQ}.
Our algorithmic framework
involves solving a challenging
master program per iteration: a nonconvex quadratically constrained optimization problem with a linear objective. When the master program is solved locally to generate iterates (as is typically required in the large-scale problems arising from image-set classification) our method produces an angle that satisfies necessary optimality conditions of the underlying problem~\eqref{eq:minmaxPQ}.

To report the performance of our proposed mathematical and algorithmic framework, we address the
image-set classification task under limited data conditions. In this context, image sets belonging to the same class are modeled as
polyhedral cones. Indeed, images belonging to the same class, referred to as class $i$, can be seen as generators of a (polyhedral) cone $Q_i$. Different classes produce distinct cones that are constructed in a training phase. When presented with a new set of images that generates a cone \( P \), the goal in the test phase is to classify this new set into one of the existing classes.
One effective method for performing this classification is by comparing the generated cones. To accomplish this, we propose using the distance $\Sim_r(P,Q)$. We classify the new set of images into class $ i^*$, which corresponds to the smallest distance. This can be expressed formally as $ i^* \in \arg\min_{i}\, \Sim_r(P, Q_i)$. 
Our numerical results show that the proposed dissimilarity measure allows for accurate classification of the image set in the ETH-80 dataset~\cite{Chen2020}. We achieved an average $90 \%$ accuracy in classification, even when only a few images (limited data) were used to build the reference cones (in the training phase).

We note that image-set classification under limited data conditions falls within the scope of \emph{Few-Shot Learning} (FSL), a framework designed to mimic the human ability to learn from few examples \cite{FSL}. Unlike traditional supervised learning, which relies on large labeled datasets and extensive training, FSL is particularly valuable in domains where data is scarce, costly to label, or simply impractical in situations such as rare diseases, unique handwriting styles, or newly discovered species. In this context, providing a robust mathematical and algorithmic framework for comparing cones can significantly advance FSL applications.

The remainder of this work is organized as follows. Section~\ref{sec:dist} introduces the proposed dissimilarity measure between convex cones based on the concept of the max-min angle, and establishes its connection to the Pompeiu-Hausdorff distance. Section~\ref{sec:comp-dist} addresses the problem of computing this measure, focusing on the underlying optimization problems and providing a thorough analysis of their mathematical properties and optimality conditions. Special cases and numerical strategies for computing the dissimilarity are discussed in Section~\ref{sec:cases}. Section~\ref{sec:poly} is dedicated to the case of polyhedral cones, where we present an algorithmic framework based on a cutting-plane method. Section~\ref{sec:numexp} reports numerical experiments using two variants of the proposed algorithm, applied to randomly generated polyhedral cones and an image-set classification task. Finally, Section~\ref{sec:conclusion} concludes the work with remarks and directions for future research.

\paragraph{Notation} Given $P \in \CC_n$, its polar cone is denoted by $P^\circ=\{w:\, \inner{w}{u}\leq 0\; \mbox{for all } u \in P\}$ and its dual is $P^* = -P^\circ$. We also denote by $P^\perp$ the set $\{w\in \R^n: \inner{w}{u}=0\; \mbox{for all } u \in P\}$. Given a closed set $X \subset \R^n$ and $u \in \R^n$, we denote  $\proj_{X}(u):=\arg\min _{x \in X} \norm{x-u}$, $\dist_{X}(u):=\min _{x \in X} \norm{x-u}$, and $\dist^2_{X}(u):=\min _{x \in X} \norm{x-u}^2$. Furthermore, $\mathtt{cl\, conv}\,X$ denotes the closed convex hull of $X$, and $\mathtt{span}\,X$ denotes the linear span of $X$. When $X$ is a linear space, $\mathtt{dim}\,X$ denotes its dimension. The unit sphere and the unit ball in $\R^n$ are denoted, respectively, by $S_n:=\{u\in\R^n:\Vert u\Vert= 1\}$ and $B_n:=\{u\in\R^n:\Vert u\Vert\leq 1\}$.

\section{Distance between convex cones}\label{sec:dist}
We start this section by proving some elementary properties of $\Theta(P,Q)$ defined by problem~\eqref{maxminPQ}.
\subsection{Overview of the max-min angle between two cones}
The following result establishes the well-definiteness of the max-min angle.
\begin{proposition}\label{pr:elementary}
Let $P,Q\in\CC_n$. The following statements are satisfied:
\begin{enumerate}
\item[(a)] $\Theta(P,Q)$ is well-defined. That is, the max-min problem \eqref{maxminPQ} has a solution.
\item[(b)] $\Theta(P,Q)$ is invariant under orthogonal linear transformations. That is, for every $n$-by-$n$ orthogonal matrix $V$, $\Theta(V(P),V(Q))=\Theta(P,Q)$.
\end{enumerate}
\end{proposition}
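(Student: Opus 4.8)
For part (a), the plan is to reformulate $\Theta(P,Q)$ in terms of the min-max problem~\eqref{eq:minmaxPQ} via the monotonicity of $\arccos$: since $\arccos$ is continuous and strictly decreasing on $[-1,1]$, we have $\Theta(P,Q)=\arccos\bigl[\cos\Theta(P,Q)\bigr]$ where $\cos\Theta(P,Q)$ is the optimal value of~\eqref{eq:minmaxPQ}. So it suffices to show the latter problem attains its optimum. First I would note that $Q\cap S_n$ is compact (closed, bounded), so for each fixed $u$ the inner maximum $F_Q(u)=\max_{v\in Q\cap S_n}\langle u,v\rangle$ is attained; moreover $F_Q$ is continuous, being the support function of the compact set $Q\cap S_n$ (as already observed in the text, it is even convex). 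Then $P\cap S_n$ is likewise compact, so the outer minimization of the continuous function $F_Q$ over $P\cap S_n$ attains its minimum by the Weierstrass extreme value theorem. Hence~\eqref{eq:minmaxPQ} has a solution and, applying $\arccos$, so does~\eqref{maxminPQ}. One small point to check: $P\cap S_n$ and $Q\cap S_n$ are nonempty, which holds because $P,Q\in\CC_n$ are closed convex cones, hence contain nonzero vectors unless they equal $\{0\}$; if either equals $\{0\}$ then $\cap S_n$ is empty and $\Theta$ is undefined, so implicitly we assume $P,Q\neq\{0\}$ (this convention should be stated, and I would add it).

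For part (b), the plan is a direct change of variables. Let $V$ be an $n\times n$ orthogonal matrix. The key facts are that $V$ is a bijection of $\R^n$ preserving the inner product, so $\langle Vu,Vv\rangle=\langle u,v\rangle$, and that $V$ maps $S_n$ onto $S_n$ (since $\|Vu\|=\|u\|$) and maps the cone $P$ bijectively onto $V(P)$, likewise $Q$ onto $V(Q)$. Consequently the substitution $u'=Vu$, $v'=Vv$ gives a bijection between the feasible set $\{(u,v): u\in P\cap S_n,\,v\in Q\cap S_n\}$ and $\{(u',v'): u'\in V(P)\cap S_n,\,v'\in V(Q)\cap S_n\}$ under which the objective $\arccos\langle u,v\rangle=\arccos\langle Vu,Vv\rangle$ is unchanged. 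Therefore the max-min value is the same:
\[
\Theta(V(P),V(Q))=\max_{u'\in V(P)\cap S_n}\min_{v'\in V(Q)\cap S_n}\arccos\langle u',v'\rangle=\max_{u\in P\cap S_n}\min_{v\in Q\cap S_n}\arccos\langle u,v\rangle=\Theta(P,Q).
\]

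Neither part presents a genuine obstacle; this is a routine well-posedness and invariance check. The only item requiring a moment of care is the compactness/continuity argument in (a)—specifically invoking that the support function of a compact set is finite and continuous everywhere—and flagging the standing assumption $P,Q\neq\{0\}$ so that the sets $P\cap S_n$, $Q\cap S_n$ are nonempty and the $\arccos$ reformulation is legitimate.
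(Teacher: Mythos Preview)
Your proposal is correct and follows essentially the same approach as the paper: for (a) you pass to the equivalent min-max problem via the monotonicity of $\arccos$ and then invoke compactness of $P\cap S_n$, $Q\cap S_n$ together with continuity of the support function $F_Q$, and for (b) you use the inner-product-preserving bijection $u\mapsto Vu$ to transfer the max-min problem unchanged. The paper's proof is identical in structure; your added remark about the standing assumption $P,Q\neq\{0\}$ is a reasonable clarification that the paper leaves implicit.
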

\begin{proof}
(a). Since the function $\arccos(\cdot)$ is decreasing, the cosine of the optimal value of problem \eqref{maxminPQ}  coincides with the optimal value of the following min-max problem:
\[\cos[\Theta(P,Q)]=\min_{u\in P\cap S_n}\max_{v\in Q\cap S_n}\langle u,v\rangle.\]
The above problem has at least one solution since $P\cap S_n$ and $Q\cap S_n$ are compact sets, and $v\mapsto\langle u,v\rangle$ and $u\mapsto\max_{v\in Q\cap S_n}\langle u,v\rangle$ are continuous (convex) functions.

(b). The equality $\Theta(V(P),V(Q))=\Theta(P,Q)$ is true because 
\[\inner{Vx}{Vy} = \inner{x}{V^\top Vy}  = \inner{x}{y}.\]
Indeed,
\begin{eqnarray*}
\max_{u\in V(P)\cap S_n}\min_{v\in V(Q)\cap S_n}\arccos\,\inner{u}{v}&=&
\max_{x\in P\cap S_n}\min_{y\in Q\cap S_n}\arccos\,\inner{Vx}{Vy}\\
&=&\max_{x\in P\cap S_n}\min_{y\in Q\cap S_n}\arccos\,\inner{x}{y}.
\end{eqnarray*}
It concludes the proof.
\end{proof}
In order to show the intuition behind the fact that $\Theta(P,Q)$ captures information about the dissimilarity between $P$ and $Q$, we exemplify the max-min problem~\eqref{maxminPQ} with bi-dimensional examples. Fig.~\ref{fig:Example}(a) depicts two cones $P$ (in red) and $Q$ (in green).
For any given $u \in P\cap S_n$, the inner minimization problem in~\eqref{maxminPQ} seeks to find a vector $v_u \in Q\cap S_n$ that minimizes the angle with $u$. For the example of Fig.~\ref{fig:Example}(a), such a $v_u$ always lies in the superior boundary of $Q$ regardless of the given $u \in P\cap S_n$. The outer maximization problem in~\eqref{maxminPQ} then seeks to find $\bar u \in P\cap S_n$ yielding the maximum of minimal angles $\arccos\, \inner{u}{v_u}$. Such an $\bar u$ lies in the superior boundary of $P$. The max-min angle $\Theta(P,Q)=\arccos\, \inner{\bar u}{\bar v}$, with $\bar v = v_{\bar u}$, is thus the optimal value of~\eqref{maxminPQ}. 
A similar analysis by switching the roles of $P$ and $Q$ gives the angle $\Theta(Q,P)$ marked in blue in Fig.~\ref{fig:Example}(a). Clearly, such angles are distinct. Fig.~\ref{fig:Example}(b) shows an example where $\Theta(P,Q)=0$ whereas $\Theta(Q,P)>0$. Hence, looking only at a single max-min angle $\Theta(P,Q)$ is not enough to assert that $P$ and $Q$ in  Fig.~\ref{fig:Example}(b) are different cones. It is now intuitive that $\Theta(P,Q)=\Theta(Q,P)=0$ implies that $P$ and $Q$ are equal, and vice-versa. In Theorem~\ref{th:dist}, we prove not only that this claim holds but also that these angles induce a distance on $\CC_n$. But before that, let us briefly comment on the inversion of the maximization and minimization roles in problem~\eqref{maxminPQ}.
\begin{figure}[htb]
\centering 
\subfigure[]
{\includegraphics[width=1.5in]{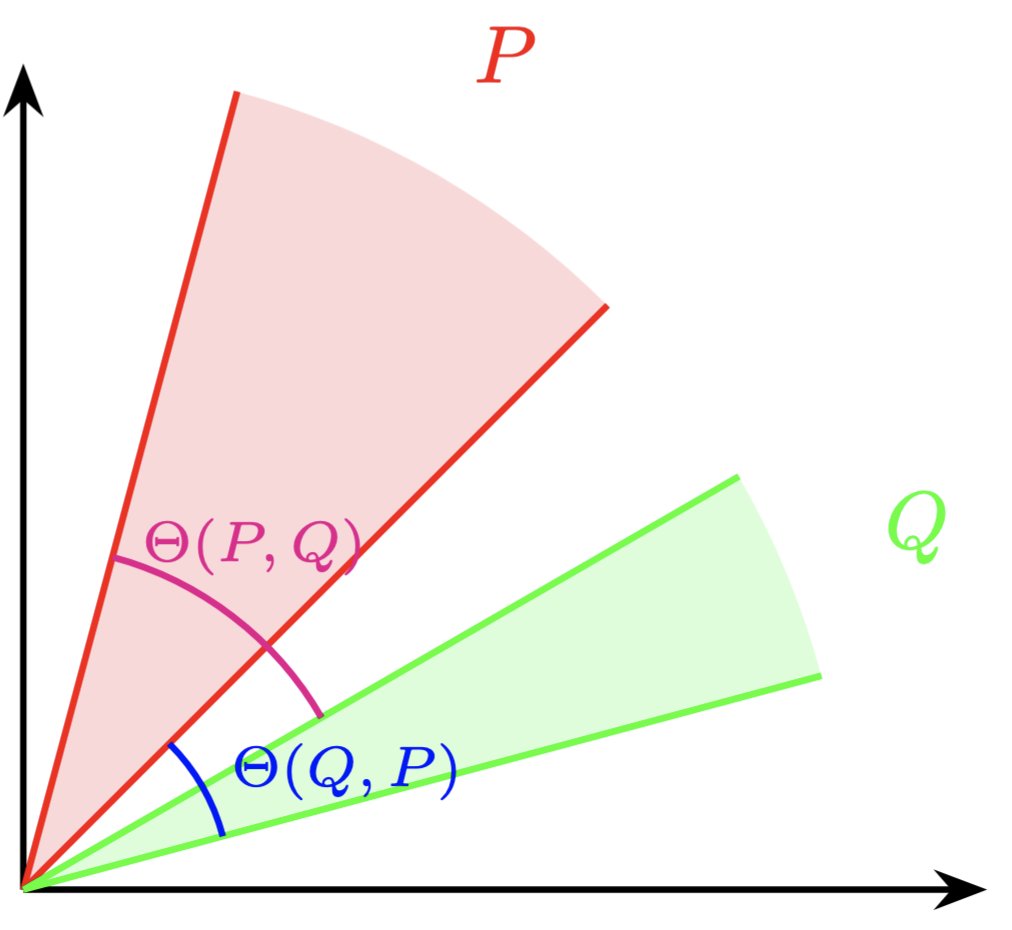}}
\subfigure[]
{\includegraphics[width=1.5in]{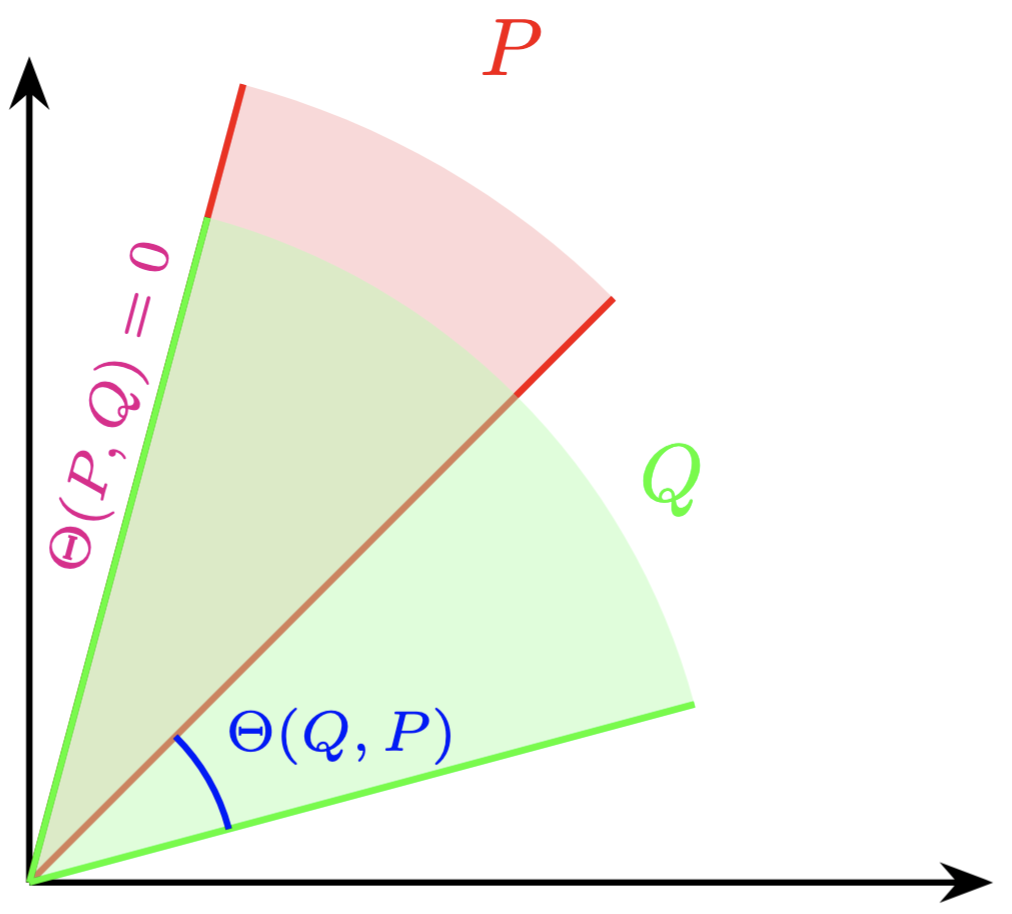}}
\subfigure[]
{\includegraphics[width=1.5in]{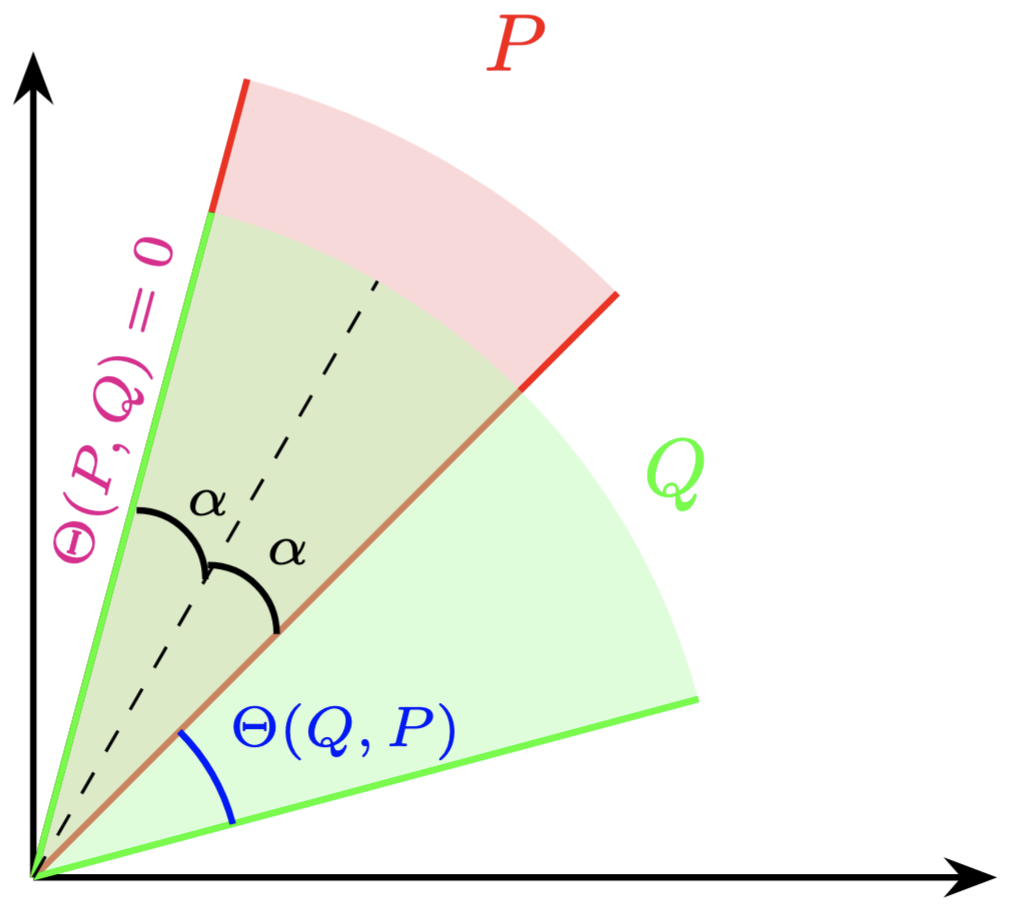}}
 \caption{Bi-dimensional illustration of the max-min problem~\eqref{maxminPQ}.
 }
  \label{fig:Example}
\end{figure}

Let us consider the min-max counterpart of problem \eqref{maxminPQ}. That is,
\begin{equation}\label{minmaxPQ}
\widehat\Theta(P,Q):=\min_{u\in P\cap S_n}\max_{v\in Q\cap S_n}\arccos\,\inner{u}{v}.
\end{equation}
We point out that there is no loss of generality in focusing attention just on the max-min problem \eqref{maxminPQ}.  Indeed, the next proposition shows that $\Theta(P,Q)$ and $\widehat\Theta(P,-Q)$ are supplementary angles.

\begin{proposition}\label{equal} For any $P,Q\in\CC_n$, 
$
\Theta(P,Q)+\widehat\Theta(P,-Q)=\pi.
$
\end{proposition}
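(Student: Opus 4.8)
The plan is to reduce $\widehat\Theta(P,-Q)$ to an expression involving $\Theta(P,Q)$ by exploiting two elementary facts: the symmetry $(-Q)\cap S_n=-(Q\cap S_n)$, and the trigonometric identity $\arccos(-t)=\pi-\arccos(t)$ valid for every $t\in[-1,1]$. Note first that the existence of optimizers for both problems is already guaranteed by Proposition~\ref{pr:elementary}(a), applied to the pair $(P,Q)$ and to the pair $(P,-Q)$ respectively, so all the $\max$'s and $\min$'s below are attained and the manipulations are legitimate.

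The first step is the change of variable in the inner problem of $\widehat\Theta(P,-Q)$. Writing a generic element of $(-Q)\cap S_n$ as $-v$ with $v\in Q\cap S_n$, and using $\inner{u}{-v}=-\inner{u}{v}$ together with $\arccos(-\inner{u}{v})=\pi-\arccos\inner{u}{v}$, one gets
\[
\max_{w\in(-Q)\cap S_n}\arccos\inner{u}{w}=\max_{v\in Q\cap S_n}\bigl(\pi-\arccos\inner{u}{v}\bigr)=\pi-\min_{v\in Q\cap S_n}\arccos\inner{u}{v},
\]
where the last equality is the standard fact that $\max(\pi-f)=\pi-\min f$.

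The second step is to substitute this into the definition \eqref{minmaxPQ} and push the constant through the outer minimization, using $\min(\pi-g)=\pi-\max g$:
\[
\widehat\Theta(P,-Q)=\min_{u\in P\cap S_n}\Bigl(\pi-\min_{v\in Q\cap S_n}\arccos\inner{u}{v}\Bigr)=\pi-\max_{u\in P\cap S_n}\min_{v\in Q\cap S_n}\arccos\inner{u}{v}=\pi-\Theta(P,Q),
\]
which is exactly the claimed identity. There is no genuine obstacle here; the only point requiring a word of care is the justification that the $\arccos(-t)=\pi-\arccos(t)$ identity may be applied pointwise before interchanging it with the optimization operators, which is immediate since $\inner{u}{v}\in[-1,1]$ for all unit vectors $u,v$, so the whole argument is a short computation.
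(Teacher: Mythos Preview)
Your proof is correct and follows essentially the same approach as the paper's own proof: both rely on the identity $\arccos(-t)=\pi-\arccos(t)$ together with the bijection $v\mapsto -v$ between $Q\cap S_n$ and $(-Q)\cap S_n$, and then push the resulting constant $\pi$ through the nested $\min$/$\max$. Your version is slightly more explicit about the change of variable and the attainment of optima, but the argument is the same.
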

\begin{proof}
    Observe that $\arccos(-\langle u,v\rangle)=\pi-\arccos\langle u,v\rangle$. Then,
    \begin{eqnarray*}
    \widehat\Theta(P,-Q)&=&\min_{u\in P\cap S_n}\max_{v\in Q\cap S_n}\arccos\,(-\inner{u}{v})\\
    &=&\pi-\max_{u\in P\cap S_n}\min_{v\in Q\cap S_n}\arccos\,\inner{u}{v}=\pi-\Theta(P,Q),
    \end{eqnarray*}
    which is equivalent to $\Theta(P,Q)+\widehat\Theta(P,-Q)=\pi.$
\end{proof}

For min-max problems, an important result is the celebrated minimax theorem~\cite{Simons95}, allowing us to invert, under convexity and concavity assumptions, the order of maximization and minimization in the underlying problem. As problem~\eqref{maxminPQ} does not satisfy such assumptions, inverting the order of the maximization and minimization gives an upper bound. Indeed, the following inequality holds in generality:
\begin{equation}\label{ineq}
\begin{array}{lcl}
\Theta(P,Q)&=&\displaystyle \max_{u\in P\cap S_n}\min_{v\in Q\cap S_n}\arccos\,\inner{u}{v}\\
 &\leq& 
\displaystyle \min_{v\in Q\cap S_n}\max_{u\in P\cap S_n}\arccos\,\inner{u}{v}=\widehat\Theta(Q,P).
\end{array}
\end{equation}
The angle computed by the right-hand side problem above is illustrated by $\alpha$ in Fig.~\ref{fig:Example}(c). This is an example where $\Theta(P,Q)$ is strictly inferior to the optimal value of the right-hand side problem above. Below, we elaborate on this example with more precision. 

\begin{example}\label{ex:strict}
Let $P$ and $Q$ be the closed convex cones in $\R^2$ illustrated in Fig.~\ref{fig:Example}(c). Observe that these cones are revolution cones (see Section\,\ref{Sec:revolution}). Indeed, $P$ is defined by fixing the revolution (central) axis $b_1\in S_2$ and the half-aperture angle $\phi_1$; that is, $P={\rm Rev}(\phi_1,b_1)$. Analogously, $Q={\rm Rev}(\phi_2,b_2)$. The precise elements of these cones are
\[\phi_1=\pi/12,\;b_1=(\cos(\pi/3),\sin(\pi/3)),\quad \phi_2=\pi/6,\;b_2=(\cos(\pi/4),\sin(\pi/4)).\]
Observe that $\arccos\,\langle b_1,b_2\rangle=\pi/12$. On the other hand, $-P={\rm Rev}(\phi_1,-b_1)$ and $\arccos\,\langle -b_1,b_2\rangle=(11/12)\pi$. Thus, by using Theorem\,\ref{th:rev} (below) and Proposition\,\ref{equal} we get
\[\Theta(P,Q)=0,\quad\widehat\Theta(Q,P)=\pi-\Theta(Q,-P)=\frac{\pi}{12}.\]
Thus, the inequality in \eqref{ineq} holds strictly.
\end{example}
\subsection{Distances between compact sets and between convex cones}
Let $\mathcal K_n$ denote the set of nonempty compact subsets of $\R^n$.
The construction of our metrics on $\CC_n$ relies on metrics on  $\mathcal K_n$. Recall that a classic distance between $C$ and $D$ in $\mathcal K_n$ is the Pompeiu-Hausdorff distance, which is defined as
\begin{equation}\label{hauss}
{\rm Hauss}(C,D):=\max\left\{\max_{x\in C}\dist_D(x),\,\max_{x\in D}\dist_C(x)\right\},
\end{equation}
where $\dist_C(x):=\min_{y\in C}\Vert x-y\Vert$ is the distance of $x\in\RR^n$ to $C$, see \cite[Chapter 4]{Rock-Wets}. Observe that the Pompeiu-Hausdorff distance is composed of the term 
\[\Lambda(C,D):=\max_{x\in C}\dist_D(x).\]
It is known that $\Lambda$ satisfies the axioms of a distance, except symmetry, see \cite{Danet,Sendov}. This is explained in the next known proposition. We give the proof for the sake of completeness. 
\begin{proposition}\label{quasi}
Let $C,D,E\in\mathcal K_n$. The map $\Lambda:\mathcal K_n\times\mathcal K_n\to\RR_+$ satisfies the following conditions:
\begin{enumerate}
\item[(a)] $\Lambda(C,C)=0$;
\item[(b)] if $\Lambda(C,D)=\Lambda(D,C)=0$, then $C=D$;
\item[(c)] $\Lambda(C,D)\leq \Lambda(C,E)+\Lambda(E,D)$.
\end{enumerate}
\end{proposition}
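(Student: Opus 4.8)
The plan is to verify the three quasi-metric axioms for $\Lambda(C,D)=\max_{x\in C}\dist_D(x)$ directly from the definition, relying on compactness of the sets in $\mathcal K_n$ to guarantee that all the maxima and minima defining $\Lambda$ and $\dist$ are attained.

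For part (a), I would observe that every $x\in C$ satisfies $\dist_C(x)=0$ (take $y=x$ in the infimum defining $\dist_C$), so the maximum over $x\in C$ of $\dist_C(x)$ is $0$; nonnegativity is immediate since $\dist$ is a distance. For part (b), suppose $\Lambda(C,D)=0$. Then $\dist_D(x)=0$ for every $x\in C$; since $D$ is closed, $\dist_D(x)=0$ forces $x\in D$, hence $C\subseteq D$. Symmetrically, $\Lambda(D,C)=0$ gives $D\subseteq C$, so $C=D$. This is the easy direction and uses only closedness.

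For part (c), the triangle inequality, I would first establish the pointwise estimate $\dist_D(x)\le \dist_E(x)+\Lambda(E,D)$ for every $x\in C$. To see this, pick $z\in E$ attaining $\dist_E(x)=\|x-z\|$ (possible since $E$ is compact); then $\dist_D(x)\le \dist_D(z)+\|x-z\|$ by the triangle inequality applied to the $1$-Lipschitz function $\dist_D(\cdot)$ (or directly: for any $w\in D$, $\|x-w\|\le\|x-z\|+\|z-w\|$, then minimize over $w$), and $\dist_D(z)\le \max_{z'\in E}\dist_D(z')=\Lambda(E,D)$. Hence $\dist_D(x)\le \dist_E(x)+\Lambda(E,D)\le \Lambda(C,E)+\Lambda(E,D)$, and taking the maximum over $x\in C$ yields $\Lambda(C,D)\le\Lambda(C,E)+\Lambda(E,D)$.

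None of the steps presents a serious obstacle; the only point requiring a little care is the pointwise estimate in (c), where one must insert an intermediate point $z\in E$ realizing $\dist_E(x)$ and then bound $\dist_D(z)$ by $\Lambda(E,D)$ — the mild subtlety being to handle the two nested optimizations cleanly, which compactness makes routine. Thus I would structure the proof as: (a) trivial, (b) two symmetric inclusions from closedness, (c) the $1$-Lipschitz/triangle argument for $\dist_D$ composed with the defining bound $\dist_D(z)\le\Lambda(E,D)$.
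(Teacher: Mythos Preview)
Your proposal is correct and follows essentially the same approach as the paper: part (a) is identical, part (c) uses the same intermediate-point argument (the paper's $k(x)$ is your $z$), and part (b) is the same inclusion argument, merely phrased directly rather than as the contrapositive.
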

\begin{proof}
Item (a). $\Lambda(C,C)=0$ because $\dist_C(x)=0$ for all $x\in C$.

Item (b). Let us prove the contrapositive statement. Suppose that $C\neq D$, then $C\not\subset D$ or $D\not\subset C$. If $C\not\subset D$, then there is $\bx\in C$ such that $\bx\notin D$. Therefore, $\dist_D(\bx)>0$, which implies that $\Lambda(C,D)\geq\dist_D(\bx)>0$. Analogously, if $D\not\subset C$ then $\Lambda(D,C)>0$.

Item (c). Let $x\in C$, and let $k(x)\in\argmin_{k\in E}\Vert x-k\Vert$. Then, from the triangle inequality of the Euclidean norm, we have that for all $y\in D$,
\[\Vert x-y\Vert\leq \Vert x-k(x)\Vert+\Vert k(x)-y\Vert=\min_{k\in E}\Vert x-k\Vert + \Vert k(x)-y\Vert=\dist_E(x)+ \Vert k(x)-y\Vert.\]
Then, the above inequality is preserved if we take the minimum on $D$:
\[\dist_D(x)=\min_{y\in D}\Vert x-y\Vert\leq \dist_E(x) + \min_{y\in D}\Vert k(x)-y\Vert=\dist_E(x)+\dist_D(k(x)).\]
Since $x\in C$ is arbitrary, we have that
\begin{equation}\label{almost}
\max_{x\in C}\dist_D(x)\leq \max_{x\in C}\dist_E(x) + \max_{x\in C}\dist_D(k(x)).
\end{equation}
Furthermore, it is clear that $\{k(x):x\in C\}\subseteq E$. Then, $\max_{x\in C}\dist_D(k(x))\leq \max_{k\in E}\dist_D(k)$.
By transitivity of this inequality with \eqref{almost},  we conclude:
\[\max_{x\in C}\dist_D(x)\leq \max_{x\in C}\dist_E(x) + \max_{k\in E}\dist_D(k).\]
Consequently, the desired triangle inequality holds.
\end{proof}
There are other metrics that can be constructed in $\mathcal K_n$ by using $\Lambda$. For instance, the Pompeiu-Eggleston distance \cite[Chapter\,9]{Deza} is defined as
\begin{equation}\label{egg}
{\rm Egg}(C,D):=\Lambda(C,D)+\Lambda(D,C).
\end{equation}
We notice that the metrics \eqref{hauss} and \eqref{egg} are obtained by taking the $\ell^\infty$-norm and $\ell^1$-norm to the vector $(\Lambda(C,D),\,\Lambda(D,C))\in\RR^2$. The following proposition shows that we can define a distance on $\mathcal K_n$ by considering any $\ell^r$-norm, for $1\leq r\leq \infty$, on $\R^2$.
\begin{proposition}\label{prop:disr}
For $1\leq r\leq \infty$, let ${\rm dis}_r:\mathcal K_n\times\mathcal K_n\to\RR^n_+$ defined by
\begin{equation}\label{dissset}
{\rm dis}_r(C,D):=\Vert(\Lambda(C,D),\,\Lambda(D,C))\Vert_r.
\end{equation}
Then, ${\rm dis}_r$ is a distance on $\mathcal K_n$. That is, for all $C,D,E\in\mathcal K_n$, the following axioms are satisfied:
\begin{enumerate}
\item[(a)] $\dis_r(C,D)\geq 0$;
\item[(b)] $\dis_r(C,D)=\dis_r(D,C)$;
\item[(c)] $\dis_r(C,D)=0$ if and only if $C=D$;
\item[(d)] $\dis_r(C,D)\leq \dis_r(C,E)+\dis_r(E,D).$
\end{enumerate}
\end{proposition}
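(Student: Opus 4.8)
The plan is to derive each of the four axioms (a)--(d) for $\dis_r$ directly from the corresponding properties of the quasi-metric $\Lambda$ established in Proposition~\ref{quasi}, combined with elementary facts about the $\ell^r$-norm on $\RR^2$. Throughout, write $\lambda(C,D):=\Lambda(C,D)$ and work with the vector $w(C,D):=(\Lambda(C,D),\Lambda(D,C))\in\RR^2_+$, so that $\dis_r(C,D)=\Vert w(C,D)\Vert_r$.

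Axiom (a) is immediate: $\Lambda$ takes values in $\RR_+$, so $w(C,D)$ has nonnegative entries and its $\ell^r$-norm is nonnegative. Axiom (b) (symmetry) is also immediate: swapping $C$ and $D$ swaps the two coordinates of $w$, and the $\ell^r$-norm is invariant under permutation of coordinates, hence $\dis_r(C,D)=\dis_r(D,C)$. For axiom (c), note $\dis_r(C,D)=0$ iff $\Vert w(C,D)\Vert_r=0$ iff $w(C,D)=(0,0)$ (a norm vanishes only at the zero vector) iff $\Lambda(C,D)=\Lambda(D,C)=0$; by Proposition~\ref{quasi}(a)--(b) this is equivalent to $C=D$.

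The only axiom requiring real work is the triangle inequality (d). Here I would combine two ingredients. First, Proposition~\ref{quasi}(c) gives the componentwise bounds $\Lambda(C,D)\le\Lambda(C,E)+\Lambda(E,D)$ and (applying (c) with the roles reversed) $\Lambda(D,C)\le\Lambda(D,E)+\Lambda(E,C)$; together these say
\begin{equation*}
w(C,D)\le w(C,E)+w(E,D)
\end{equation*}
with the inequality understood entrywise in $\RR^2$. Second, the $\ell^r$-norm is monotone with respect to this entrywise order on the nonnegative orthant: if $0\le a\le b$ entrywise then $\Vert a\Vert_r\le\Vert b\Vert_r$. Applying monotonicity and then the triangle inequality for $\Vert\cdot\Vert_r$ on $\RR^2$ yields
\begin{equation*}
\dis_r(C,D)=\Vert w(C,D)\Vert_r\le\Vert w(C,E)+w(E,D)\Vert_r\le\Vert w(C,E)\Vert_r+\Vert w(E,D)\Vert_r=\dis_r(C,E)+\dis_r(E,D),
\end{equation*}
which is the claim. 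I would state the entrywise-monotonicity of $\Vert\cdot\Vert_r$ explicitly (it holds for every $1\le r\le\infty$, including $r=\infty$) since that is the one step that uses nonnegativity of $\Lambda$ in an essential way — it is what lets us pass from the componentwise estimates of Proposition~\ref{quasi}(c) to a norm inequality. No step here is genuinely hard; the main thing to be careful about is invoking Proposition~\ref{quasi}(c) in both orderings to get both coordinates of the vector bound, and noting that monotonicity of the $\ell^r$-norm is what glues the quasi-metric triangle inequality to the norm triangle inequality.
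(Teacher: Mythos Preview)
Your proposal is correct and follows essentially the same approach as the paper: axioms (a)--(c) are handled identically, and for (d) both you and the paper combine Proposition~\ref{quasi}(c) applied in both orderings with the monotonicity (isotonicity) of $\Vert\cdot\Vert_r$ on $\RR^2_+$ and the triangle inequality for $\Vert\cdot\Vert_r$. The only cosmetic difference is that you apply monotonicity first and then the norm triangle inequality, whereas the paper applies them in the reverse order; both chains are valid and yield the same conclusion.
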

\begin{proof}
Items (a) and (b) follow directly from the definition \eqref{dissset}. 

Let us prove (c). If $\dis_r(C,D)=0$ then $\Lambda(C,D)=0$ and $\Lambda(D,C)=0$. Then, from Proposition\,\ref{quasi}(b) we get $C=D$. Conversely, if $C=D$ then $\Lambda(C,D)=\Lambda(D,C)=0$ because of Proposition\,\ref{quasi}(a). It follows that $\dis_r(C,D)=0$.

Let us prove the triangle inequality (d). Let $C,D,E\in\mathcal K_n$. We know from Proposition\,\ref{quasi}(c) that $\Lambda$ satisfies the triangle inequality. Then,
\begin{equation}\label{2ineq}
\Lambda(C,D)\leq \Lambda(C,E)+\Lambda(E,D)\quad\text{and}\quad \Lambda(D,C)\leq \Lambda(D,E)+\Lambda(E,C).
\end{equation}
Hence,
\begin{eqnarray}
\dis_r(C,E)+\dis_r(E,D)&=&\Vert(\Lambda(C,E),\,\Lambda(E,C))\Vert_r + \Vert(\Lambda(E,D),\,\Lambda(D,E))\Vert_r\nonumber\\
&\geq& \Vert(\Lambda(C,E)+\Lambda(E,D),\,\Lambda(E,C)+\Lambda(D,E))\Vert_r,\label{ineqtri}
\end{eqnarray}
where the inequality is because of the triangular inequality of $\Vert\cdot\Vert_r$. Observe that $\Vert\cdot\Vert_r$ is isotonic in $\R^2_+$. It means, if $x,y\in\R^2_+$ are such that $x_1\leq y_1$ and $x_2\leq y_2$, then $\Vert x\Vert_r\leq\Vert y\Vert_r$. Thus, from \eqref{2ineq} we deduce
\begin{equation}\label{ineqtri2}
\begin{array}{rcl}
\dis_r(C,D)&=&\Vert(\Lambda(C,D),\,\Lambda(D,C))\Vert_r\\&\leq&\Vert (\Lambda(C,E)+\Lambda(E,D),\,\Lambda(D,E)+\Lambda(E,C))\Vert_r.
\end{array}
\end{equation}
Therefore, by combining \eqref{ineqtri} and \eqref{ineqtri2} we get the desired triangle inequality.
\end{proof}

Now, we explain how to construct a distance on $\CC_n$ (the set of closed convex cones in $\R^n$). Since the elements in $\CC_n$ are not bounded, we can not use the distance $\dis_r$ directly because it is for compact sets. However, we can truncate the elements in $\CC_n$ by intersecting them with the unit sphere of $\R^n$. Then, we can apply $\dis_r$ on these truncated sets and it will induce a distance on $\CC_n$. A similar procedure was also applied in \cite{IS2010} and \cite{SeSo1}. This is specified in the next theorem. We also show the connection of such a distance with the max-min angles $\Theta(P,Q)$ and $\Theta(Q,P)$. 
\begin{theorem}\label{th:dist}
For $1\leq r\leq\infty$, let $\Sim_r:\CC_n\times\CC_n\to\RR_+$ be defined as
\begin{equation}\label{simr}
\Sim_r(P,Q):={\rm dis}_r(P\cap S_n,Q\cap S_n).
\end{equation}
Then, $\Sim_r$ is a distance on $\mathcal C_n$. Furthermore, it admits the formula
\begin{equation}\label{DisrTheta}
\Sim_r(P,Q)=2\left\Vert\left(\sin\left[\frac{\Theta(P,Q)}{2}\right],\,\sin\left[\frac{\Theta(Q,P)}{2}\right]\right)\right\Vert_r.
\end{equation}
\end{theorem}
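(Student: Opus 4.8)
The statement has two parts: that $\Sim_r$ is a distance on $\CC_n$, and that it admits the closed-form expression \eqref{DisrTheta}. I would prove the formula first, since once we know $\Sim_r(P,Q) = 2\Vert(\sin[\Theta(P,Q)/2],\sin[\Theta(Q,P)/2])\Vert_r$, the metric axioms can be transferred from $\dis_r$ (Proposition~\ref{prop:disr}) applied to the compact sets $P\cap S_n$ and $Q\cap S_n$ in $\mathcal K_n$. Indeed, $\Sim_r(P,Q)$ is \emph{defined} as $\dis_r(P\cap S_n, Q\cap S_n)$, so (a), (b), and (d) are immediate from Proposition~\ref{prop:disr}; the only axiom needing extra care is (c), because $\dis_r(P\cap S_n,Q\cap S_n)=0$ gives $P\cap S_n = Q\cap S_n$, and one must observe that a closed convex cone $P$ is recovered from $P \cap S_n$ via $P = \{0\}\cup\{tu : t>0,\ u\in P\cap S_n\}$ (equivalently, $P\cap S_n = Q \cap S_n \iff P=Q$). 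So the metric part is short modulo the formula.

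For the formula, the key is the identity $\Lambda(P\cap S_n, Q\cap S_n) = 2\sin[\Theta(P,Q)/2]$. Unwinding the definition of $\Lambda$, I would compute
\[
\Lambda(P\cap S_n, Q\cap S_n) = \max_{u\in P\cap S_n}\ \min_{v\in Q\cap S_n}\ \Vert u - v\Vert .
\]
Now for unit vectors $u,v$, one has $\Vert u-v\Vert^2 = 2 - 2\inner{u}{v} = 2 - 2\cos\theta = 4\sin^2(\theta/2)$ where $\theta = \arccos\inner{u}{v}\in[0,\pi]$, hence $\Vert u-v\Vert = 2\sin(\theta/2)$. Since $t\mapsto 2\sin(t/2)$ is increasing and continuous on $[0,\pi]$, it commutes with both the inner $\min$ and the outer $\max$: for fixed $u$, $\min_{v\in Q\cap S_n}\Vert u-v\Vert = 2\sin\big(\tfrac12\min_{v\in Q\cap S_n}\arccos\inner{u}{v}\big)$, and then maximizing over $u\in P\cap S_n$ gives $2\sin[\Theta(P,Q)/2]$ by the definition \eqref{maxminPQ} of $\Theta$. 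The same computation with the roles of $P$ and $Q$ swapped yields $\Lambda(Q\cap S_n, P\cap S_n) = 2\sin[\Theta(Q,P)/2]$. Substituting both into $\dis_r(C,D) = \Vert(\Lambda(C,D),\Lambda(D,C))\Vert_r$ and pulling the common factor $2$ out of the $\ell^r$-norm gives \eqref{DisrTheta}.

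I do not anticipate a serious obstacle here; the monotonicity argument that lets $2\sin(\cdot/2)$ pass through the nested $\max$-$\min$ is the one slightly delicate point, and it rests only on the fact that a monotone increasing continuous function commutes with $\min$ and $\max$ over compact sets. One should also briefly record why $\Theta(P,Q)$ and the inner minimum are attained, but this is exactly Proposition~\ref{pr:elementary}(a) together with the compactness of $Q\cap S_n$ and continuity of $v\mapsto\arccos\inner{u}{v}$, so it can be cited rather than reproved. The cleanest write-up is: (i) the half-angle identity for unit vectors; (ii) the commutation lemma giving $\Lambda(P\cap S_n,Q\cap S_n)=2\sin[\Theta(P,Q)/2]$; (iii) substitution into $\dis_r$ to get the formula; (iv) transfer of the four axioms from Proposition~\ref{prop:disr}, with the one-line remark on recovering $P$ from $P\cap S_n$ for axiom (c).
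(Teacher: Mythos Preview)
Your proposal is correct and follows essentially the same approach as the paper: reduce the formula to the identity $\Lambda(P\cap S_n,Q\cap S_n)=2\sin[\Theta(P,Q)/2]$ via the half-angle relation $\Vert u-v\Vert^2=2(1-\cos\theta)$ for unit vectors, then invoke Proposition~\ref{prop:disr} for the metric axioms. The paper routes the max--min computation through $\cos[\Theta(P,Q)]=\min_u\max_v\langle u,v\rangle$ rather than through the monotonicity of $t\mapsto 2\sin(t/2)$, but this is a cosmetic difference; your explicit remark that $P\cap S_n=Q\cap S_n\Rightarrow P=Q$ for axiom~(c) is a point the paper leaves implicit.
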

\begin{proof}
That $\Dis_r$ is a distance on $\CC_n$ follows directly from Proposition\,\ref{prop:disr}. Let us prove the formula \eqref{DisrTheta}. By the definitions of $\Lambda(P\cap S_n,Q\cap S_n)$ and $\Theta(P,Q)$, and by using the identity $1-\cos\alpha=2\sin^2(\alpha/2)$ we get,
\begin{eqnarray*}
\Lambda(P\cap S_n,Q\cap S_n)&=&\max_{u\in P\cap S_n}\min_{v\in Q\cap S_n}\Vert u-v\Vert=\left( \max_{u\in P\cap S_n}\min_{v\in Q\cap S_n}\Vert u-v\Vert^2\right)^{1/2}\\
&=&\sqrt{2}\left(1-\min_{u\in P\cap S_n}\max_{v\in Q\cap S_n}\langle u,v\rangle\right)^{1/2}\\
&=&\sqrt{2}\left(1-\cos[\Theta(P,Q)]\right)^{1/2}=2\sin[\Theta(P,Q)/2].
\end{eqnarray*}
Thus, the desired formula follows from $\eqref{simr}$ and \eqref{dissset}.
\end{proof}

\section{On computing the dissimilarity distance}\label{sec:comp-dist}
In the previous section, we have seen that the max-min problem \eqref{maxminPQ} is equivalent, in terms of solution, to the following min-max problem:
\begin{equation}\label{minimax}
\cos \left[\Theta(P,Q)\right]=\min_{u\in P\cap S_n}\max_{v\in Q\cap S_n}\langle u,v\rangle.
\end{equation}
For $u\in\mathbb R^n$, we set
\begin{equation}\label{support}
F_Q(u):=\max_{v\in Q\cap S_n}\langle u,v\rangle,
\end{equation}
which corresponds to the support function of $Q\cap S_n$. Hence, 
problem \eqref{minimax} can be formulated as minimizing the support function of $Q\cap S_n$ over $P\cap S_n$:
\begin{equation}\label{minsupport}
\mathfrak{s}(P,Q):=\min_{u\in P\cap S_n}F_Q(u).
\end{equation}
Thus, our original problem \eqref{maxminPQ} reduces to solving problem \eqref{minsupport}. Indeed, we have 
\[\Theta(P,Q)=\arccos\left(\mathfrak{s}(P,Q)\right).\]
From now on, we will focus on solving problem \eqref{minsupport}. We start by listing some properties of the support function $F_Q$ that will be used later. 
\begin{proposition}\label{prop FQ}
Let $Q\in\mathcal C_n$, and let $u\in\R^n$. The following statements hold:
\begin{enumerate}
\item[(a)] The function $F_Q$ given in \eqref{support} is convex and its domain is $\R^n$.
\item[(b)]  $\displaystyle F_Q(u)=\frac{1+\norm{u}^2-\dist_{Q\cap S_n}^2(u)}{2}$.
\item[(c)] $\proj_{Q\cap S_n}(u)=\arg\max_{v \in Q\cap S_n} \inner{u}{v}$.
\item[(d)] $\partial F_Q(u) = \mathtt{conv}\, \proj_{Q\cap S_n}(u)$.
\item[(e)] $F_Q(u) = \inner{u}{v'}$ for all $v'\in \proj_{Q\cap S_n}(u)$.
\item[(f)] If $u \notin Q^\circ$, then $\partial F_Q(u)=\{\proj_Q(u)/\Vert \proj_Q(u)\Vert\}$. 
\end{enumerate}
\end{proposition}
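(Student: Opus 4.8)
The plan is to dispatch the six items in order, since each leans on the preceding ones, with a single elementary observation doing most of the work: for $v\in S_n$ one has $\|u-v\|^2=\|u\|^2+1-2\langle u,v\rangle$, equivalently $\langle u,v\rangle=\frac{1}{2}\big(\|u\|^2+1-\|u-v\|^2\big)$. For (a), $F_Q$ is the support function of the nonempty (since $Q\neq\{0\}$) compact set $Q\cap S_n\subseteq B_n$; support functions are sublinear, hence convex, and $-\|u\|\le F_Q(u)\le\|u\|$ by Cauchy--Schwarz shows the domain is all of $\R^n$. For (b) and (c), the identity above shows that over $Q\cap S_n$ the functional $v\mapsto\langle u,v\rangle$ is maximized precisely where $v\mapsto\|u-v\|^2$ is minimized; evaluating the identity at an optimal point gives the formula in (b), while equality of the optimal sets gives $\arg\max_{v\in Q\cap S_n}\langle u,v\rangle=\proj_{Q\cap S_n}(u)$, which is (c). Item (e) is then immediate: by (c) every $v'\in\proj_{Q\cap S_n}(u)$ maximizes $\langle u,\cdot\rangle$ over $Q\cap S_n$, so $\langle u,v'\rangle=F_Q(u)$.

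For (d) I would apply Danskin's theorem to $F_Q(u)=\max_{v\in C}\langle u,v\rangle$ with $C:=Q\cap S_n$ a fixed compact set and the integrand $(u,v)\mapsto\langle u,v\rangle$ linear (in particular continuously differentiable) in $u$; since the $u$-gradient of $\langle u,v\rangle$ is simply $v$, this yields $\partial F_Q(u)=\mathtt{conv}\big(\arg\max_{v\in C}\langle u,v\rangle\big)=\mathtt{conv}\,\proj_{Q\cap S_n}(u)$, using (c). As $\proj_{Q\cap S_n}(u)$ is the argmin of a continuous function over a compact set, it is itself compact, so its convex hull is already closed and equals $\mathtt{cl\, conv}\,\proj_{Q\cap S_n}(u)$, which is (d). Alternatively, one may use that $F_Q$ is unchanged when $Q\cap S_n$ is replaced by its closed convex hull $K$, so that $\partial F_Q(u)$ is the face of $K$ exposed by $u$, and then identify this face with the closed convex hull of the maximizers of $\langle u,\cdot\rangle$ over $Q\cap S_n$ via a Krein--Milman/Milman argument; Danskin's theorem is the shorter route.

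For (f), assume $u\notin Q^\circ$. First I would record $u\notin Q^\circ\iff\proj_Q(u)\neq 0$: indeed $\proj_Q(u)=0$ iff $\langle u,q\rangle\le 0$ for all $q\in Q$ iff $u\in Q^\circ$ (equivalently, via Moreau's decomposition, iff $u=\proj_{Q^\circ}(u)$). Write $u=p+q_0$ with $p:=\proj_Q(u)\in Q$, $q_0:=\proj_{Q^\circ}(u)\in Q^\circ$, and $\langle p,q_0\rangle=0$. For any $v\in Q\cap S_n$ we have $\langle q_0,v\rangle\le 0$, so $\langle u,v\rangle\le\langle p,v\rangle\le\|p\|$; since $\|p\|>0$, the second inequality becomes an equality (Cauchy--Schwarz with $\|v\|=1$) only for $v=p/\|p\|$, and conversely $v=p/\|p\|\in Q\cap S_n$ does attain the value $\|p\|$ because $\langle q_0,p\rangle=0$. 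Hence $\proj_{Q\cap S_n}(u)=\{\proj_Q(u)/\|\proj_Q(u)\|\}$ is a singleton, and (d) gives $\partial F_Q(u)=\{\proj_Q(u)/\|\proj_Q(u)\|\}$.

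The only step that calls for genuine care is (d): one must either quote Danskin's theorem in exactly the right form (maximum of a parametrized family over a fixed compact set, with integrand differentiable in the parameter) and then observe that the convex hull of the compact set $\proj_{Q\cap S_n}(u)$ is closed, or carry out the short but honestly convex-geometric exposed-face argument. Everything else reduces to the spherical identity $\|u-v\|^2=\|u\|^2+1-2\langle u,v\rangle$, Cauchy--Schwarz, and Moreau's decomposition.
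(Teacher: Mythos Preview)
Your proof is correct. For items (a)--(c) and (e) you proceed exactly as the paper does, via the identity $\|u-v\|^2=\|u\|^2+1-2\langle u,v\rangle$ on the sphere.

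The genuine difference is in (d) and (f). The paper dispatches both by citation: (d) to a general subdifferential formula for suprema of convex functions (Proposition~2.13 in \cite{WWBook}, or \cite{Hantoute_Lopez_2008}), and (f) to \cite[Thm~8.1]{Bauschke}. You instead give self-contained arguments: Danskin's theorem for (d), followed by the observation that the convex hull of the compact set $\proj_{Q\cap S_n}(u)$ is already closed; and Moreau's decomposition $u=\proj_Q(u)+\proj_{Q^\circ}(u)$ together with Cauchy--Schwarz for (f). Your route has the advantage of being elementary and fully explicit, at the cost of a few more lines; the paper's route keeps the proof compact but pushes the substance into the literature. Both are legitimate, and your direct argument for (f) is particularly clean.
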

\begin{proof}
Convexity of $F_Q$ is trivial.
As $Q\cap S_n$ is a compact set, $F_Q$ is real-valued. This proves item (a).
    Observe that for all $v \in Q\cap S_n$, we have that $\norm{v-u}^2=1+\norm{u}^2-2\inner{u}{v}$. Thus, $\dist_{Q\cap S_n}^2(u) = \min_{v\in Q\cap S_n}\norm{v-u}^2=1+\norm{u}^2-2\max_{v\in Q\cap S_n} \inner{u}{v} = 1+\norm{u}^2-2F_Q(u)$, giving item (b).
This last development also shows  that $\proj_{Q\cap S_n}(u) =\arg\max_{v\in Q\cap S_n}\langle u,v\rangle$, proving items (c) and (e). 
Let us prove item (d).
It follows from \cite[Eq. 5]{Hantoute_Lopez_2008} that $\partial F_Q(u) = \mathtt{cl\, conv}\, \arg\max_{Q\cap S_n} \; \langle u, v \rangle$.  Since $Q \cap S_n$ is compact,  we can drop the closure: $\partial F_Q(u)=\mathtt{conv}\, \arg\max_{Q\cap S_n} \; \langle u, v \rangle= \mathtt{conv}\,\proj_{Q\cap S_n}(u)$. 
Item (f) is proved in \cite[Thm 8.1]{Bauschke}. 
\end{proof}
As a consequence of item (a) of Proposition\,\ref{prop FQ}, we deduce that $F_Q$ is continuous on $\R^n$. From this fact and the compactness of $P\cap S_n$, the extreme value theorem ensures that \eqref{minsupport} is always solvable.

We now provide a necessary optimality condition for a point $\bar u\in \R^n$ to solve problem~\eqref{minsupport}.
\begin{proposition} \label{prop:stat}
    For any stationary point $\bar u$ to problem~\eqref{minsupport} there exists $\bv\in Q$ such that the following system is satisfied:
\begin{equation}\label{optim}
 \begin{array}{l}
P\ni\bu\perp \bv -\langle\bu,\bv\rangle \bu\in P^\ast,\\
\norm{\bar u}=1,\; \bar v \in \mathtt{conv\,}\arg\max_{v \in Q\cap S_n} \inner{\bar u}{v}.
\end{array}   
\end{equation}
Furthermore, if $\arg\max_{v \in Q\cap S_n} \inner{\bar u}{v}$ is a singleton, then the above system is equivalent to 
\begin{equation}\label{optim2}
 \begin{array}{l}
P\ni\bu\perp \bv -\langle\bu,\bv\rangle \bu\in P^\ast,\\
Q\ni\bv\perp \bu -\langle\bu,\bv\rangle \bv\in Q^\circ,\\
\norm{\bar u}=\norm{\bar v} =1.
\end{array}
\end{equation}
This is the case, for instance, if $\bar u\not \in Q^\circ$.
\end{proposition}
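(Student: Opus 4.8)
The plan is to rewrite the generalized KKT system \eqref{kkt} into \eqref{optim} by splitting the subdifferential and exploiting that $P$ is a cone, and then to pass to the symmetric system \eqref{optim2} under the singleton hypothesis. Since $F_Q$ is convex and finite on all of $\RR^n$ (Proposition~\ref{prop FQ}(a)), hence continuous, the subdifferential sum rule applies and $\partial(F_Q+i_P)(\bu)=\partial F_Q(\bu)+N_P(\bu)$, where $N_P(\bu)$ denotes the normal cone of $P$ at $\bu$. Because $P$ is a closed convex cone and $\bu\in P$, plugging $0$ and $2\bu$ into the defining inequality of $N_P(\bu)$ gives $N_P(\bu)=P^\circ\cap\{\bu\}^\perp$; and by Proposition~\ref{prop FQ}(c)--(d), $\partial F_Q(\bu)=\mathtt{cl\,conv}\,\arg\max_{v\in Q\cap S_n}\inner{\bu}{v}$, which lies in $Q$ since $Q$ is a closed convex cone containing $Q\cap S_n$. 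Hence \eqref{kkt} provides $\lambda\in\RR$, $\bv\in\partial F_Q(\bu)\subseteq Q$ and $w\in N_P(\bu)$ with $\lambda\bu=\bv+w$. Taking the inner product with $\bu$ and using $\norm{\bu}=1$ and $\inner{w}{\bu}=0$ gives $\lambda=\inner{\bu}{\bv}$, so $-w=\bv-\inner{\bu}{\bv}\bu$; since $w\in P^\circ$ and $\inner{w}{\bu}=0$, this vector belongs to $-P^\circ=P^*$ and is orthogonal to $\bu$. Together with $\bu\in P$ and $\norm{\bu}=1$, this is exactly \eqref{optim}.

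Now suppose $\arg\max_{v\in Q\cap S_n}\inner{\bu}{v}$ is a singleton; then its closed convex hull is that same singleton, so the $\bv$ obtained above is precisely this unique maximizer, whence $\bv\in Q\cap S_n$, $\norm{\bv}=1$, and $\bv\perp(\bu-\inner{\bu}{\bv}\bv)$ is automatic. To get the remaining relation of \eqref{optim2}, I would use first-order optimality for the inner problem: for $q\in Q$, the normalized curve $t\mapsto(\bv+tq)/\norm{\bv+tq}$ lies in $Q\cap S_n$ for all small $t\ge0$ (as $Q$ is a convex cone and $\norm{\bv}=1$), and since $\bv$ maximizes $\inner{\bu}{\cdot}$ over $Q\cap S_n$ its right derivative at $t=0$, namely $\inner{\bu}{q}-\inner{\bu}{\bv}\inner{\bv}{q}=\inner{\bu-\inner{\bu}{\bv}\bv}{q}$, is nonpositive; letting $q$ range over $Q$ yields $\bu-\inner{\bu}{\bv}\bv\in Q^\circ$, completing \eqref{optim2}. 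The converse follows by reversing this reasoning: the $Q$-line of \eqref{optim2} is the stationarity system of $\max_{v\in Q\cap S_n}\inner{\bu}{v}$ at a feasible point, which under the singleton hypothesis forces that point to be the unique maximizer, recovering the membership $\bv\in\mathtt{cl\,conv}\,\arg\max$ in \eqref{optim}. Finally, the case $\bu\notin Q^\circ$ is subsumed, because Proposition~\ref{prop FQ}(f) then makes $\partial F_Q(\bu)$ a singleton, and as it equals $\mathtt{cl\,conv}\,\arg\max_{v\in Q\cap S_n}\inner{\bu}{v}$ (Proposition~\ref{prop FQ}(d)) and the argmax is nonempty by compactness, the argmax itself is a singleton.

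The only nonroutine step is the derivation of $\bu-\inner{\bu}{\bv}\bv\in Q^\circ$: the curvature of the sphere prevents reading it off directly from maximality of $\bv$, so the normalized-curve directional-derivative computation is the essential device. Some care is also needed in the converse implication, where the singleton hypothesis is exactly what ties the multiplier vector appearing in \eqref{optim2} to the subgradient $\bv\in\partial F_Q(\bu)$ appearing in \eqref{optim}.
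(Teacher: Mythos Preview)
Your proof is correct and mirrors the paper's closely: both apply the sum rule $\partial(F_Q+i_P)(\bu)=\partial F_Q(\bu)+N_P(\bu)$, use $N_P(\bu)=P^\circ\cap\{\bu\}^\perp$, and eliminate $\lambda$ by pairing with $\bu$ to reach \eqref{optim}. The only substantive variation is in deriving $\bu-\inner{\bu}{\bv}\bv\in Q^\circ$: the paper writes down the KKT system of the inner problem $\max_{v\in Q\cap S_n}\inner{\bu}{v}$ with a multiplier on $\|v\|^2=1$ and simplifies, whereas you compute the one-sided derivative of $t\mapsto\inner{\bu}{(\bv+tq)/\|\bv+tq\|}$ directly. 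Your route is slightly more self-contained (no implicit appeal to a constraint qualification for the inner problem), but the two are otherwise interchangeable.

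One remark on your converse sketch: the claim that the $Q$-line of \eqref{optim2} ``under the singleton hypothesis forces that point to be the unique maximizer'' is not valid as stated---stationarity for this nonconvex problem does not imply optimality even when the maximizer is unique (take $Q=\RR^n$, where $\bv=-\bu$ also satisfies the $Q$-line). The paper's proof in fact establishes only the forward implication \eqref{optim}$\Rightarrow$\eqref{optim2}, so the word ``equivalent'' in the statement is being used informally; your forward argument is complete and matches theirs.
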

\begin{proof}
 Observe that \eqref{minsupport} can be rewritten as
\[
\min_{u } \; F_Q(u) + \ind_P(u) \quad \mbox{s.t.}\quad \frac{1}{2 }\norm{u}^2 -\displaystyle \frac{1}{2} =0,
\]
where $\ind_P$ denotes the indicator function of $P$. Let $\bu$ be a stationary point to this problem; equivalently, a stationary point to \eqref{minsupport}. Since the above problem satisfies the  linear independence constraint qualification, 
there exists $\lambda\in\R$ such that the following (generalized KKT) system is satisfied:
\begin{equation}\label{kkt}
\left\{
\begin{array}{llll}
0 & \in \partial (F_Q + \ind_P)(\bar u) - \lambda \bar u\\
\bar u & \in P,\;\; \norm{\bar u} =1.
\end{array}
\right.
\end{equation}
Observe that 
$\partial (  F_Q +\ind_P)( \bu) = \partial  F_Q(\bu) +\partial \ind_P( \bu) = \partial  F_Q(\bu) +N_P( \bu),$
due to the fact that $F_Q$ is convex and real-valued and $P\neq \emptyset$ is a closed convex set; see for instance  \cite[Corollaries 2.6 and 2.7]{WWBook}. Recall that $N_P(\bu)$ denotes the normal cone to $P$ at $\bu$, and since $P$ is a closed convex cone, $N_P( \bar u)=\{w \in P^\circ: w\perp \bar u\}$.
The first inclusion in~\eqref{kkt} ensures the existence of $\bar v \in \partial F_Q(\bar u)$ and $w \in N_P(\bar u)$ such that 
$0=\bar v + w -\lambda \bar u$. By multiplying this inequality by $\bar u$ and recalling that $\inner{w}{\bar u}=0$ for all $w \in N_P(\bar u)$, we get that $\lambda = \inner{\bar u}{\bar v}$. Thus, $-w=\bar v - \inner{\bar u}{\bar v} \bar u \in -N_P(\bar u)\subset - P^\circ = P^*$, and we can rewrite \eqref{kkt} as
\[
\left\{
\begin{array}{llll}
P&\ni \bar u \perp  \bar v  - \inner{\bar u}{\bar v} \bar u \in P^*\\
\partial F_Q(\bar u) &\ni  \bar v  ,\;\; \norm{\bar u} =1.
\end{array}
\right.
\]
Proposition~\ref{prop FQ} item (c) concludes \eqref{optim}. 

Furthermore, if $\arg\max_{v \in Q\cap S_n} \inner{\bar u}{v}$ is a singleton, then $\bar v$ in the stated system is the only  solution to $\max_{v \in Q\cap S_n} \inner{\bar u}{v}$. Thus, $\bar v$ must satisfy the following KKT system
\[
\left\{
\begin{array}{llll}
0 & \in -\bar u + N_Q(\bar v) - \gamma \bar v\\
\bar v & \in Q,\;\; \norm{\bar v} =1.
\end{array}
\right.
\]
By proceeding as before, we get $\gamma=-\inner{\bar u}{\bar v}$ and thus $-\bar u + \inner{\bar u}{\bar v}\bar v \in -N_Q(\bar v)\subset -Q^\circ$, i.e., $\bar u-\inner{\bar u}{\bar v}\bar v \in Q^\circ$. By replacing the inclusion $\bar v \in \partial F_Q(\bar u)$ above with these latter identities we get the stated system.
Proposition~\ref{prop FQ} items (f), (d) and (c) ensure that if $\bar u\not \in Q^\circ$, then $\arg\max_{v \in Q\cap S_n} \inner{\bar u}{v}$ is a singleton.
\end{proof}

\begin{remark}\label{rembd}
In general, a solution of \eqref{minsupport} may not lie on the boundary of $P$. Indeed, let $Q:=\{v\in\R^2: |v_1|\leq v_2\}$ (the Lorentz cone in $\R^2$) and let $P=-Q$. It is not difficult to see that $\mathfrak{s}(P,Q)=\cos(3\pi/4)$, and that $\bu=(0,-1)$ is a solution of \eqref{minsupport} since  $\proj_{Q\cap S_2}(\bu)=\{(1/\sqrt{2},1/\sqrt{2}),\;(-1/\sqrt{2},1/\sqrt{2})\}$, see \eqref{proj:rev}. Observe that $\bu$ is not in the boundary of $P$. Furthermore, the vector $\bv$ associated with $\bu$ in the optimality conditions described in Proposition\,\ref{prop:stat} may fail to have unit norm. Indeed, for the above example, $\bu=(0,-1)$ satisfies the optimality conditions \eqref{optim} with $\bv=(0,1/\sqrt{2})\in \mathtt{conv\,}\proj_{Q\cap S_2}(\bu)$. Observe that $\Vert \bv\Vert=1/\sqrt{2}.$
\end{remark}

It is clear that $-1\leq\mathfrak{s}(P,Q)\leq 1$ for every $P,Q\in\mathcal C_n$. Below, we describe when the extremes are attained. For $w\in\R^n$, we denote $\R_+ (w):=\{\alpha w:\alpha\geq 0\}$.
\begin{proposition}\label{prop:extreme}
Let $P,Q\in\mathcal C_n$. Then,
\begin{enumerate}
\item[(a)] $\mathfrak{s}(P,Q)=1$ if and only if $P\subseteq Q$.
\item[(b)] $\mathfrak{s}(P,Q)=-1$ if and only if $Q=\R_+(-\bu)$ for some $\bu\in P\cap S_n$.
\end{enumerate}
\end{proposition}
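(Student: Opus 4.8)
The plan is to work directly with the reformulation $\mathfrak{s}(P,Q)=\min_{u\in P\cap S_n}F_Q(u)$ and the elementary Cauchy--Schwarz bounds: for unit vectors $u,v$ one has $-1\le\langle u,v\rangle\le 1$, whence $-1\le F_Q(u)\le 1$ on $S_n$ and $-1\le\mathfrak{s}(P,Q)\le 1$. Since $F_Q$ is continuous (Proposition~\ref{prop FQ}(a)) and $P\cap S_n$ is nonempty and compact, the outer minimum is attained, so in both parts I would fix a minimizer $\bar u\in P\cap S_n$ with $F_Q(\bar u)=\mathfrak{s}(P,Q)$ and argue pointwise.

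For part (a), I would note that since $F_Q\le 1$ on $P\cap S_n$, the equality $\mathfrak{s}(P,Q)=1$ is equivalent to $F_Q(u)=1$ for every $u\in P\cap S_n$. Using Proposition~\ref{prop FQ}(b), on the sphere $F_Q(u)=1-\tfrac12\dist^2_{Q\cap S_n}(u)$, so $F_Q(u)=1$ holds iff $\dist_{Q\cap S_n}(u)=0$, i.e.\ (as $Q\cap S_n$ is closed) iff $u\in Q\cap S_n$. Hence $\mathfrak{s}(P,Q)=1$ iff $P\cap S_n\subseteq Q$, and since every nonzero element of $P$ is a positive multiple of a vector in $P\cap S_n$ and $0\in Q$, this is in turn equivalent to $P\subseteq Q$.

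For part (b), for the forward direction I would use that $\mathfrak{s}(P,Q)=-1$ forces $\max_{v\in Q\cap S_n}\langle\bar u,v\rangle=-1$, so $\langle\bar u,v\rangle\le -1$ for all $v\in Q\cap S_n$; combined with $\langle\bar u,v\rangle\ge-\Vert\bar u\Vert\,\Vert v\Vert=-1$ this gives $\langle\bar u,v\rangle=-1$, and the equality case of Cauchy--Schwarz for unit vectors yields $v=-\bar u$. Thus $Q\cap S_n\subseteq\{-\bar u\}$; moreover $Q\cap S_n\ne\emptyset$, since otherwise $F_Q\equiv-\infty$ and $\mathfrak{s}(P,Q)$ could not equal $-1$. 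Hence $Q\cap S_n=\{-\bar u\}$, and since $Q$ is a cone, $Q=\R_+(-\bar u)$ with $\bar u\in P\cap S_n$. Conversely, if $Q=\R_+(-\bar u)$ for some $\bar u\in P\cap S_n$, then $Q\cap S_n=\{-\bar u\}$ and $F_Q(u)=-\langle u,\bar u\rangle$ for all $u$, so $\mathfrak{s}(P,Q)=-\max_{u\in P\cap S_n}\langle u,\bar u\rangle$; this maximum is bounded above by $1$ (Cauchy--Schwarz) and attained at $u=\bar u$, so $\mathfrak{s}(P,Q)=-1$.

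I do not expect a genuine obstacle: the argument is essentially bookkeeping around Cauchy--Schwarz and its equality case. The points I would be careful about are (i) making sure the outer minimum is attained so that the pointwise reasoning is legitimate, (ii) invoking correctly that two unit vectors with inner product $-1$ must be antipodal, and (iii) the degenerate cones $P=\{0\}$ or $Q=\{0\}$, for which $\mathfrak{s}$ is not finite-valued; these are tacitly excluded by the standing estimate $-1\le\mathfrak{s}(P,Q)\le1$ recorded just before the proposition.
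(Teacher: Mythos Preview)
Your proposal is correct and follows essentially the same route as the paper's proof: both reduce (a) to ``$F_Q(u)=1$ for all $u\in P\cap S_n$'' and use the equality case of Cauchy--Schwarz (you phrase it via the distance identity of Proposition~\ref{prop FQ}(b), the paper directly via $\langle u,v\rangle=1\Rightarrow u=v$), and both treat (b) by sandwiching $\langle\bar u,v\rangle$ between $-1$ and $-1$ to force $Q\cap S_n=\{-\bar u\}$. Your remarks on attainment and on the degenerate case $Q=\{0\}$ are apt but do not alter the argument.
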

\begin{proof}
Item (a). Suppose that $\mathfrak{s}(P,Q)=1$. Then, $F_Q(u)=1$ for all $u\in P\cap S_n$. Hence, for all $u\in P\cap S_n$, there exists $v\in Q\cap S_n$ such that $\langle u,v\rangle =1$. This last means that $u=v$, so $u\in Q$. Therefore, $P\subseteq Q$. Conversely, if $P\subseteq Q$, then for $u\in P\cap S_n$ we can take $v=u\in Q\cap S_n$ which implies that $\langle u,v\rangle=1$. Thus, $F_Q(u)=1$ for all $u\in P\cap S_n$ which means $\mathfrak{s}(P,Q)=1$.\\
Item (b). Suppose that $\mathfrak{s}(P,Q)=-1$. Then, there exists $\bu\in P\cap S_n$ such that $F_Q(\bu)=-1$. Thus, from the definition of $F_Q$ and the Cauchy-Schwarz inequality we get $-1\leq \langle \bu,v\rangle\leq -1$ for all $v\in Q\cap S_n$. It means that $v=-\bu$ for all $v\in Q\cap S_n$. Therefore, $Q=\R_+(-\bu)$. Conversely, suppose that $Q=\R_+(-\bu)$ for some $\bu\in P\cap S_n$. Then, $F_Q(u)=-\langle u,\bu\rangle$ for all $u\in P\cap S_n$. Thus, $\mathfrak{s}(P,Q)=-1$.
\end{proof}

Next, we state sufficient conditions to ensure that any stationary point of \eqref{minsupport} is on the boundary of $P$ relative to some subspace.
\begin{proposition}\label{boundary}
Let $P,Q\in\mathcal C_n$ be such that $|\mathfrak{s}(P,Q)|\neq 1$. Let $L\subseteq\R^n$ be the smallest linear subspace containing both $P$ and $Q$. Let $\bu$ be a stationary point of the problem \eqref{minsupport} such that $\proj_{Q\cap S_n}(\bu)$ is a singleton. Then, $\bu$ is on the boundary of $P$ relative to $L$.
\end{proposition}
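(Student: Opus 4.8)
The plan is to argue by contradiction: suppose $\bu$ is a solution of \eqref{minsupport} lying in the relative interior of $P$ with respect to $L$, and show that this forces $|\mathfrak{s}(P,Q)|=1$, contradicting the hypothesis. The idea is that being in the relative interior of the cone $P$ inside $L$ gives enough freedom to move $\bu$ along $P\cap S_n$ in \emph{every} feasible direction within $L$, so the first-order (stationarity) information at $\bu$ must be trivial in a strong sense.

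First I would record what stationarity gives us. Since $\bu$ solves \eqref{minsupport} and $\proj_{Q\cap S_n}(\bu)$ is a singleton, Proposition~\ref{prop:stat} (applied through the KKT system \eqref{kkt}, noting $\bu$ is automatically stationary as a minimizer on the smooth manifold $P\cap S_n$ — or invoking \eqref{optim}) yields a unique $\bv=\proj_{Q\cap S_n}(\bu)$ with
\[
P\ni\bu\perp \bv-\langle\bu,\bv\rangle\bu\in P^\ast,\qquad \norm{\bu}=1.
\]
Now if $\bu$ is in the relative interior of $P$ in $L$, then the normal cone $N_P(\bu)$ (computed within $L$, which is all that matters since $\bv,\bu\in L$) reduces to $(\mathtt{span}\,\bu)^{\perp}\cap L^{\perp}$-type triviality: more precisely $N_P(\bu)\cap L = \{0\}$ when $\bu\in\ri P$, because a relative interior point of a convex cone has normal cone equal to the orthogonal complement of its span, and here $\bu$ spans a ray that is relatively interior. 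Hence the residual $\bv-\langle\bu,\bv\rangle\bu$, which lies in $-N_P(\bu)$ and also in $L$, must be zero. Therefore $\bv=\langle\bu,\bv\rangle\bu$, and since $\norm{\bv}\le 1$ while $\norm{\bu}=1$ this gives $\langle\bu,\bv\rangle=\pm\norm{\bv}\cdot$(sign), forcing $\bv=\pm\bu$. In either case $|\langle\bu,\bv\rangle|=1$, so $F_Q(\bu)=\langle\bu,\bv\rangle=\pm 1$ by Proposition~\ref{prop FQ}(e); combined with $-1\le\mathfrak{s}(P,Q)\le F_Q(\bu)$ and optimality, this yields $\mathfrak{s}(P,Q)=\pm 1$ (for the $+1$ case directly; for the $-1$ case since $F_Q(\bu)=-1$ is the minimum possible value). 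Either way $|\mathfrak{s}(P,Q)|=1$, contradicting the hypothesis.

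There is one gap to close carefully: the claim that $\bv-\langle\bu,\bv\rangle\bu$ actually lies in $L$. This holds because $\bu\in L$ and $\bv\in Q\cap S_n\subseteq L$, so the combination is in $L$; and $N_P(\bu)$ restricted to its action matters only against directions in $L$ since $P\subseteq L$. I would phrase this via: $P\subseteq L$ implies $L^{\perp}\subseteq N_P(\bu)$ always, and the ``interesting'' part of $N_P(\bu)$ lives in $L$; when $\bu\in\ri_L P$ that interesting part is $\{w\in L: \langle w,\bu\rangle=0,\ \langle w,x\rangle\le 0\ \forall x\in P\}=\{0\}$ because $\bu$ is interior to $P$ within $\mathtt{span}_L P$.

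The main obstacle I anticipate is the bookkeeping around relative interiors and the decomposition of the normal cone relative to $L$ versus $\R^n$: one must be precise that ``boundary of $P$ relative to $L$'' is the correct notion and that $\ri_L P = \emptyset$ cannot occur in a way that vacuously satisfies the statement (it cannot, since $P\neq\{0\}$ here as $|\mathfrak{s}(P,Q)|\neq 1$ rules out degenerate cases, or one handles $P=\{0\}$ separately). Everything else — deducing $\bv=\pm\bu$ from the vanishing residual and $\norm{\bv}\le 1$, and translating $|\langle\bu,\bv\rangle|=1$ into $|\mathfrak{s}(P,Q)|=1$ — is routine given Propositions~\ref{prop FQ} and~\ref{prop:extreme}.
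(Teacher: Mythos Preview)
Your proof is correct and follows essentially the same route as the paper's: both argue by contradiction, invoke the stationarity conditions of Proposition~\ref{prop:stat} (in the singleton form~\eqref{optim2}) to place $\bv-\langle\bu,\bv\rangle\bu$ in $P^\ast\cap\bu^\perp\cap L$, and then use relative interiority in $L$ to force this residual to vanish, giving $|\langle\bu,\bv\rangle|=1$. The paper carries out the last step concretely by testing the explicit direction $w=(\langle\bu,\bv\rangle\bu-\bv)/\Vert\langle\bu,\bv\rangle\bu-\bv\Vert$ against the inequality defining $P^\ast$, whereas you phrase it via $N_P(\bu)\cap L=\{0\}$; note also that since $\bv\in Q\cap S_n$ you have $\Vert\bv\Vert=1$ exactly (not merely $\le 1$), which is what you need to conclude $|\langle\bu,\bv\rangle|=1$ from $\bv=\langle\bu,\bv\rangle\bu$.
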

\begin{proof}
Reasoning by contradiction, suppose that $\bu$ is in the interior of $P$ relative to $L$. Then, there exists $\epsilon>0$ such that $\bu+\epsilon S_n\cap L\subset P$. From the necessary optimality conditions for $\bu$ given in Proposition\,\ref{prop:stat}, there is $\bv\in Q$ such that $(\bu,\bv)$ satisfies the system \eqref{optim}. Moreover, since we assume that $\proj_{Q\cap S_n}(\bu)$ ($=\arg\max_{v \in Q\cap S_n} \inner{\bar u}{v}$) is a singleton, $(\bu,\bv)$ satisfies \eqref{optim2}, and we deduce that $\mathfrak{s}(P,Q)=\langle\bu,\bv\rangle$ and $\Vert \bv\Vert=1$. We claim that $\bv-\langle \bu,\bv\rangle \bu\neq 0$. In fact, if $\bv-\langle \bu,\bv\rangle \bu= 0$ then,
\[1=\Vert \bv\Vert=|\langle \bu,\bv\rangle|\Vert \bu\Vert=|\langle \bu,\bv\rangle|,\] 
which is not possible because of  $|\mathfrak{s}(P,Q)|\neq 1$.   Let $w:=(\langle \bu,\bv\rangle \bu-\bv)/\Vert \langle \bu,\bv\rangle \bu-\bv\Vert\in S_n\cap L$. Then, $\bu+\epsilon w\in P.$ From \eqref{optim2} we have
\[0\leq\langle \bu+\epsilon w,\bv-\langle \bu,\bv\rangle \bu\rangle=\epsilon\langle w,\bv-\langle \bu,\bv\rangle \bu\rangle=-\epsilon\Vert \bv-\langle \bu,\bv\rangle \bu\Vert\leq0,\]
obtaining $\bv-\langle \bu,\bv\rangle \bu=0$ which is a contradiction.
\end{proof}

\section{Special cases}\label{sec:cases}

\subsection{Dissimilarity between linear subspaces}
Let $P$ and $Q$ be linear subspaces in $\RR^n$ of dimensions $p$ and $q$, respectively, given by
\begin{equation}\label{subspaces}
P=U(\RR^p)\quad\text{and}\quad Q=V(\R^q),
\end{equation}
where $U\in\R^{n\times p}$ and $V\in\R^{n\times q}$ are matrices with their columns forming orthonormal basis for $P$ and $Q$, respectively.

 The principal angles  $\theta_1,\ldots,\theta_m$ between $P$ and $Q$ are defined recursively by
 \begin{eqnarray}\label{thetak}
 \cos \theta_k\, =  \max_{ u\in P_k \cap S_n\,,\,  v\in Q_k\cap  S_n}\; \langle u,v\rangle,
 \end{eqnarray}
 where $m:= \min\{ p,q\}$ and
 \begin{equation*}
\left\{\begin{array}{lll}
 P_1:=P, \;Q_1:=Q, \\ [1,2mm]
P_{k+1}:= \{w\in P_k: \langle u_k,w\rangle =0\},\\ [1,2mm]
Q_{k+1}:= \{w\in Q_k: \langle v_k,w\rangle =0\},\\ [1,2mm]
 (u_k,v_k) \mbox{ solution to } (\ref{thetak}).
\end{array}
\right.
\end{equation*}
Observe that $0\leq\theta_1\leq\theta_2\leq\cdots\leq\theta_m\leq\pi/2$. The largest principal angle between $P$ and $Q$ will be denoted by $\theta_{\max}(P,Q)$. When $P$ and $Q$ are of the form \eqref{subspaces}, from \cite[Theorem\,1]{Bjorck} (see also \cite[Theorem\,6.2]{SeSo1}) we have that $\theta_k$ is a principal angle of $(P,Q)$ if and only if $\cos\theta_k$ is a singular value of $V^\top U$. Furthermore, $\cos(\theta_m)\leq\cos(\theta_{m-1})\leq\cdots\leq\cos(\theta_1)$. Thus, $\cos(\theta_{\max}(P,Q))$ is the smallest singular value of $V^\top U$.

Computing the separation between linear subspaces is a classic problem in linear algebra. For this purpose, many distances were constructed, and most of them are based on the principal angles between subspaces, see \cite{YeLim}. In particular, 
for a pair of subspaces of the same dimension $P$ and $Q$, the spectral distance between them is defined as
\begin{equation}\label{spec}
d_{\rm spec}(P,Q)=2\sin[\theta_{\max}(P,Q)/2].
\end{equation}
In the next theorem, we describe the distance $\Dis_r(P,Q)$ when $P$ and $Q$ are linear subspaces.
\begin{theorem}
Let $1\leq r\leq\infty$, and let $P=U(\R^p)$  and $Q=V(\R^q)$ be linear subspaces of $\R^n$ as in \eqref{subspaces}. Then,
\begin{align*}
&\displaystyle {\rm Dis}_r(P,Q)= 2\left\Vert\left(\frac{\sqrt{2}}{2}, \sin\left[\frac{\theta_{\max}(P,Q)}{2}\right]\right)\right\Vert_r,\quad\text{if }p \neq q,\\
&\displaystyle {\rm Dis}_r(P,Q)= 2\Vert(1,1)\Vert_r\sin\left[\frac{\theta_{\max}(P,Q)}{2}\right],\quad\text{if } p=q,
\end{align*}
where $\theta_{\max}(P,Q)\in[0,\pi/2]$ is the largest principal angle between $P$ and $Q$; that is, $\cos(\theta_{\max}(P,Q))$ is the smallest singular value of $V^\top U$.

\end{theorem}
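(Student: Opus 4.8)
The plan is to compute the two max--min angles $\Theta(P,Q)$ and $\Theta(Q,P)$ explicitly and then substitute into formula~\eqref{DisrTheta} of Theorem~\ref{th:dist}. The key first observation is that for a \emph{subspace} $Q$ and any $u\in S_n$ one has
\[
F_Q(u)=\max_{v\in Q\cap S_n}\langle u,v\rangle=\Vert\proj_Q(u)\Vert,
\]
because $u-\proj_Q(u)\perp Q$ gives $\langle u,v\rangle=\langle\proj_Q(u),v\rangle$ for every $v\in Q$, and Cauchy--Schwarz makes this maximum equal to $\Vert\proj_Q(u)\Vert$ (the case $\proj_Q(u)=0$ is included, both sides being zero). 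Hence, by~\eqref{minsupport},
\[
\cos[\Theta(P,Q)]=\mathfrak{s}(P,Q)=\min_{u\in P\cap S_n}\Vert\proj_Q(u)\Vert .
\]

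Next I would pass to coordinates. Writing $u=Ux$, orthonormality of the columns of $U$ gives $\Vert u\Vert=\Vert x\Vert$, and $\proj_Q(u)=VV^\top u$ yields $\Vert\proj_Q(u)\Vert^2=u^\top VV^\top u=\Vert V^\top U x\Vert^2$; therefore
\[
\mathfrak{s}(P,Q)=\min_{x\in\R^p,\;\Vert x\Vert=1}\Vert V^\top U\,x\Vert .
\]
Everything now reduces to a standard fact about the $q\times p$ matrix $M:=V^\top U$: the quantity $\min_{\Vert x\Vert=1}\Vert Mx\Vert$ equals $0$ when $p>q$ (the wide matrix $M$ has a nontrivial kernel) and equals the smallest singular value $\sigma_{\min}(M)$ when $p\le q$. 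Combining this with the fact recalled after~\eqref{subspaces} via~\cite{Bjorck}, namely that the singular values of $V^\top U$ are exactly $\cos\theta_1\ge\cdots\ge\cos\theta_m$ with $m=\min\{p,q\}$, and using that $\arccos$ inverts $\cos$ on $[0,\pi/2]$ while $\theta_{\max}(P,Q)\in[0,\pi/2]$, I obtain
\[
\Theta(P,Q)=\begin{cases}\theta_{\max}(P,Q)&\text{if }\mathtt{dim} P\le\mathtt{dim} Q,\\[1mm] \pi/2&\text{if }\mathtt{dim} P>\mathtt{dim} Q.\end{cases}
\]
Since the principal angles of $(P,Q)$ and of $(Q,P)$ coincide (the singular values of $V^\top U$ and of $U^\top V$ are the same), in particular $\theta_{\max}(P,Q)=\theta_{\max}(Q,P)$, and the same computation with $P$ and $Q$ exchanged gives the analogous formula for $\Theta(Q,P)$.

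It then remains to feed these values into~\eqref{DisrTheta}. If $\mathtt{dim} P=\mathtt{dim} Q$, then $\Theta(P,Q)=\Theta(Q,P)=\theta_{\max}(P,Q)$, so ${\rm Dis}_r(P,Q)=2\Vert(1,1)\Vert_r\sin[\theta_{\max}(P,Q)/2]$. If $\mathtt{dim} P\neq\mathtt{dim} Q$, then exactly one of the two angles equals $\theta_{\max}(P,Q)$ and the other equals $\pi/2$, whose half-sine is $\sin(\pi/4)=\sqrt2/2$; since $\Vert\cdot\Vert_r$ on $\R^2$ is invariant under permuting coordinates, the order is irrelevant and ${\rm Dis}_r(P,Q)=2\Vert(\tfrac{\sqrt2}{2},\,\sin[\theta_{\max}(P,Q)/2])\Vert_r$. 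The one point that requires care is the dimension bookkeeping behind ``$\min_{\Vert x\Vert=1}\Vert Mx\Vert=\sigma_{\min}(M)$ or $0$'': one must verify that when $\mathtt{dim} P>\mathtt{dim} Q$ the minimum is genuinely $0$ (not the smallest \emph{nonzero} singular value), so that $\Theta(P,Q)=\pi/2$, and that when $\mathtt{dim} P\le\mathtt{dim} Q$ the $q\times p$ matrix $V^\top U$ has exactly $p$ singular values, the smallest of which is $\cos\theta_{\max}(P,Q)$. Everything else is a direct substitution.
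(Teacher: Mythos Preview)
Your proposal is correct and follows essentially the same route as the paper: compute $F_Q(u)=\Vert V^\top u\Vert$, reduce $\mathfrak{s}(P,Q)$ to $\min_{\Vert x\Vert=1}\Vert V^\top U x\Vert$, identify this with the smallest singular value, and substitute into~\eqref{DisrTheta}. The only presentational difference is that the paper first splits on whether $P\cap Q^\perp=\{0\}$ and then invokes the Rank--Nullity Theorem to translate that into the dimension comparison $p\lessgtr q$, whereas you go directly through the shape of the $q\times p$ matrix $V^\top U$; your shortcut is slightly cleaner but the argument is the same.
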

\begin{proof}
Let $u\in\R^n$ be such that $\Vert u\Vert=1$. Let us compute the projection of $u$ onto $Q\cap S_n$:
\begin{eqnarray*}
\bv\in \proj_{Q\cap S_n}(u)&\Leftrightarrow&\bv\in\argmax_{v\in Q\cap S_n}\,\langle u,v\rangle\,\Leftrightarrow\, \bv=V\bar y\;\text{ with }\; \by\in\argmax_{y\in S_q}\,\langle y,V^\top u\rangle,\\
&\Leftrightarrow&\bv=\frac{VV^\top u}{\Vert V^\top u\Vert}\,\text{ if }\,V^\top u\neq 0; \,\text{otherwise}\;\bv\in Q\cap S_n.
\end{eqnarray*}
From Proposition\,\ref{prop FQ}(e) we know that $F_Q(u)=\langle u,\bv\rangle$ with $\bv\in \proj_{Q\cap S_n}(u)$. Therefore, from the above computation, we deduce that
\[F_Q(u)=\begin{cases}
\Vert V^\top u\Vert,&\text{if }u\notin Q^\perp\\
0,&\text{if }u\in Q^\perp
\end{cases}.\]
Hence, if $P\cap Q^\perp=\{0\}$,
\[\mathfrak{s}(P,Q)=\min_{u\in P\cap S_n}F_Q(u)=\min_{u\in P\cap S_n}\Vert V^\top u\Vert=\min_{x\in S_p}\Vert V^\top Ux\Vert=\sigma_{\min}(V^\top U),\]
where $\sigma_{\min}(V^\top U)$ denotes the smallest singular value of $V^\top U$. If $P\cap Q^\perp\neq\{0\}$, it is clear that $\mathfrak{s}(P,Q)=0$. Thus,

\[\mathfrak{s}(P,Q)=\begin{cases}
\sigma_{\min}(V^\top U),&\text{if }P\cap Q^\perp=\{0\},\\
0,&\text{if }P\cap Q^\perp\neq\{0\}.
\end{cases}\]
Recall that $\theta_{\max}(P,Q)=\arccos(\sigma_{\min}(V^\top U))$. Therefore, we deduce that
\begin{equation}\label{theta-arccos}
\Theta(P,Q)=\arccos(\mathfrak{s}(P,Q))=\begin{cases}
\theta_{\max}(P,Q),&\text{if }P\cap Q^\perp=\{0\},\\
\frac{\pi}{2},&\text{if }P\cap Q^\perp\neq\{0\}.
\end{cases}
\end{equation}
By interchanging the roles of $P$ and $Q$, and recalling that $\sigma_{\min}(V^\top U)=\sigma_{\min}(U^\top V)$, we also get that
\begin{equation}\label{theta-arccos2}
\Theta(Q,P)=\begin{cases}
\theta_{\max}(P,Q),&\text{if }P^\perp\cap Q=\{0\},\\
\frac{\pi}{2},&\text{if }P^\perp\cap Q\neq\{0\}.
\end{cases}
\end{equation}

Recall that $p=\mathtt{dim} P$ and $q=\mathtt{dim} Q$, and set $E:=V^\top U\in\R^{q\times p}$. Suppose that $p>q $. The Rank-Nullity Theorem says that $\mathtt{dim}({\rm Ker}(E))+\mathtt{rank}(E)=p$. Since $\mathtt{rank}(E)\leq q<p$, we deduce that $\mathtt{dim}({\rm Ker}(E))\geq 1$. It implies that there exists a nonzero $x\in\R^p$ such that $V^\top U x=0$. Thus, $Ux\in P\cap Q^\perp$, and from \eqref{theta-arccos} we deduce that $\Theta(P,Q)=\pi/2$. Now, to compute $\Theta(Q,P)$, suppose that $P^\perp\cap Q\neq\{0\}$. Then, there exists a nonzero $y\in\R^q$ such that $U^\top Vy=0$. Hence, $\mathtt{dim}({\rm Ker}(E^\top))\geq 1$. The Rank-Nullity Theorem for $E^\top$ says that $\mathtt{dim}({\rm Ker}(E^\top))+\mathtt{rank}(E)=q$. Then, $\mathtt{rank}(E)<q$ which means that $E$ is not of complete rank. Hence, $\sigma_{\min}(E)=0$, and then $\Theta(Q,P)=\theta_{\max}(P,Q)=\arccos(0)=\pi/2$. Therefore, by combining it with \eqref{theta-arccos2} we get that $\Theta(Q,P)=\theta_{\max}(P,Q)$ when either $P^\perp\cap Q\neq\{0\}$ or $P^\perp\cap Q=\{0\}$. Thus, 
\begin{equation}\label{dis}
\displaystyle {\rm Dis}_r(P,Q)= 2\left\Vert\left(\frac{\sqrt{2}}{2}, \sin\left[\frac{\theta_{\max}(P,Q)}{2}\right]\right)\right\Vert_r.
\end{equation}
By symmetry, the above result is also valid when $p<q$. Hence, \eqref{dis} holds whenever $p\neq q$.

Suppose that $p=q$. Then, analogous to the above reasoning, the matrix $E=V^\top U\in\R^{p\times p}$ is not of complete rank when $P\cap Q^\perp\neq\{0\}$. Thus, $\sigma_{\min}(E)=0$, and then $\Theta(P,Q)=\theta_{\max}(P,Q)=\arccos(0)=\pi/2$. By combining it with \eqref{theta-arccos} we get that $\Theta(P,Q)=\theta_{\max}(P,Q)$ in the both cases of \eqref{theta-arccos}. Analogously, we also have $\Theta(Q,P)=\theta_{\max}(P,Q)$. Hence, if $p=q$ then
\begin{eqnarray*}
\displaystyle {\rm Dis}_r(P,Q)&=& 2\left\Vert\left(\sin\left[\frac{\theta_{\max}(P,Q)}{2}\right], \sin\left[\frac{\theta_{\max}(P,Q)}{2}\right]\right)\right\Vert_r\\
&=&2\Vert(1,1)\Vert_r\sin\left[\frac{\theta_{\max}(P,Q)}{2}\right].
\end{eqnarray*}
The proof is complete.
\end{proof}

\begin{remark}
We observe that $\Dis_{\infty}(P,Q)=d_{\rm spec}(P,Q)$ when $P$ and $Q$ are linear subspaces of equal dimensions. That is, the above theorem reveals that the spectral distance \eqref{spec} is induced by the Pompeiu-Hausdorff distance.
\end{remark}

\subsection{Dissimilarity between revolution cones}\label{Sec:revolution}
A revolution cone in $\R^n$ is a closed convex cone defined as
\[{\rm Rev}(\phi,b):=\{u\in\R^n:\,\langle b,u\rangle\geq\Vert u\Vert\cos\phi\},\]
where $b\in S_n$ defines the revolution axis, and $\phi\in[0,\pi/2)$ is the half-aperture angle.
\begin{theorem}\label{th:rev}
Let $P={\rm Rev}(\phi_1,b_1)$ and $Q={\rm Rev}(\phi_2,b_2)$, with $b_1,b_2\in S_n$, and $\phi_1,\phi_2\in[0,\pi/2)$. Then,
\[\Theta(P,Q)=\max\left\{0,\,\min\{\pi, \arccos\langle b_1,b_2\rangle +\phi_1\}-\phi_2\right\}.\]
\end{theorem}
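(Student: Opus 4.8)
The plan is to translate everything into elementary spherical geometry. The key observation is that $d(x,y):=\arccos\langle x,y\rangle$ is a metric on $S_n$ (the great-circle distance), that for a revolution cone the truncated set ${\rm Rev}(\phi,b)\cap S_n$ is precisely the closed $d$-ball $\{w\in S_n:\ d(w,b)\le\phi\}$ (because $\langle b,w\rangle\ge\cos\phi$ is equivalent to $\arccos\langle b,w\rangle\le\phi$ for $w\in S_n$ and $\phi\in[0,\pi/2]$), and that on $S_n$ (with $n\ge2$) any two points at distance $\le\pi$ are joined by a minimizing great-circle arc that can be prolonged until its total length reaches $\pi$. Under this dictionary, the inner minimization in \eqref{maxminPQ} is ``distance from a point to a ball'' and the outer maximization is ``farthest point of a ball from a given point'', both with closed-form answers. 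Throughout I assume $n\ge2$; for $n=1$ revolution cones degenerate to rays and the statement is immediate.

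\textbf{Step 1 — the inner minimum.} For a fixed $u\in S_n$ I will show
\[
\min_{v\in Q\cap S_n}\arccos\langle u,v\rangle=\max\{0,\ \arccos\langle u,b_2\rangle-\phi_2\}.
\]
If $u\in Q$ both sides vanish (take $v=u$ on the left; on the right $\arccos\langle u,b_2\rangle\le\phi_2$). If $u\notin Q$, then for every $v\in Q\cap S_n$ the triangle inequality gives $\arccos\langle u,b_2\rangle=d(u,b_2)\le d(u,v)+d(v,b_2)\le d(u,v)+\phi_2$, which yields ``$\ge$''; for ``$\le$'' I take $v$ to be the point at $d$-distance $\phi_2$ from $b_2$ on a minimizing great-circle arc from $u$ to $b_2$ (it exists since $0\le\phi_2<d(u,b_2)\le\pi$), so that $v\in Q\cap S_n$ and $d(u,v)=d(u,b_2)-\phi_2$. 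Alternatively this identity follows from Proposition~\ref{prop FQ}(e) together with the explicit projection onto a revolution cone.

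\textbf{Step 2 — the outer maximum.} Since $t\mapsto\max\{0,t-\phi_2\}$ is nondecreasing, Step 1 gives
\[
\Theta(P,Q)=\max\bigl\{0,\ M-\phi_2\bigr\},\qquad M:=\max_{u\in P\cap S_n}\arccos\langle u,b_2\rangle,
\]
so it remains to compute $M$, the largest great-circle distance from $b_2$ to the ball $P\cap S_n$. Writing $\alpha:=\arccos\langle b_1,b_2\rangle$, the triangle inequality gives $d(u,b_2)\le d(u,b_1)+d(b_1,b_2)\le\phi_1+\alpha$ for $u\in P\cap S_n$, and $d(u,b_2)\le\pi$ trivially, so $M\le\min\{\pi,\alpha+\phi_1\}$. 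For the reverse bound I distinguish two cases. If $\alpha+\phi_1\le\pi$, let $u$ be the point obtained by following the great circle through $b_2$ and $b_1$ from $b_2$ past $b_1$ for a further arclength $\phi_1$ (explicitly, $u=\cos(\alpha+\phi_1)\,b_2+\sin(\alpha+\phi_1)\,w$ with $w$ the unit vector making $b_1=\cos\alpha\,b_2+\sin\alpha\,w$, or $u$ any point at distance $\phi_1$ from $b_1$ when $b_1=b_2$); then $d(u,b_1)=\phi_1$, hence $u\in P\cap S_n$, and $d(u,b_2)=\alpha+\phi_1$. If $\alpha+\phi_1>\pi$, then $d(-b_2,b_1)=\pi-\alpha<\phi_1$, so $-b_2\in P\cap S_n$ and $d(-b_2,b_2)=\pi$. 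In both cases $M=\min\{\pi,\alpha+\phi_1\}$, and substituting into the displayed formula for $\Theta(P,Q)$ gives the claimed expression.

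\textbf{Main obstacle.} The conceptual content is light — just the triangle inequality on $(S_n,d)$ applied twice — so the delicate part is the bookkeeping around degenerate configurations: checking that the interpolating unit vectors in Steps 1 and 2 genuinely exist (this is where $n\ge2$ and the possibility of prolonging arcs up to total length $\pi$ are used, and where the subcases $u=-b_2$, $b_1=b_2$, and $\phi_i\in\{0,\pi/2\}$ deserve a quick separate look), and making sure the outer clamp $\min\{\pi,\cdot\}$ and the inner clamp $\max\{0,\cdot\}$ faithfully record that $\arccos$ ranges only over $[0,\pi]$. I expect no essential difficulty beyond this.
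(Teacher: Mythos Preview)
Your proof is correct and takes a genuinely different, more elegant route than the paper's. The paper proceeds by first normalizing $b_2=e_n$ via orthogonal invariance, then invoking the explicit projection formula onto ${\rm Rev}(\phi_2,e_n)\cap S_n$ from \cite{Bauschke} to write out $F_Q(u)$, and finally splitting into three cases according to the relative sizes of $\beta+\phi_1$, $\phi_2$, and $\pi$; in the intermediate case it appeals to Proposition~\ref{boundary} (optimality on the relative boundary) and a KKT argument to reduce the problem to two dimensions before comparing the two boundary candidates. Your approach bypasses all of this machinery by recognizing that ${\rm Rev}(\phi,b)\cap S_n$ is a closed metric ball for the great-circle distance $d$, so that the inner problem is ``point-to-ball distance'' and the outer problem is ``radius of the smallest ball about $b_2$ containing a given ball'', each solved by a single application of the triangle inequality with an explicit geodesic witness. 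What your route buys is conceptual transparency and independence from the projection results of \cite{Bauschke} and from Propositions~\ref{prop:extreme} and~\ref{boundary}; what the paper's route buys is that it illustrates the general optimality framework of Section~\ref{sec:comp-dist} on a concrete example. The degenerate configurations you flag ($b_1=\pm b_2$, $u=-b_2$, $n=1$) are indeed harmless, as you anticipate.
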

\begin{proof}
Without loss of generality, we may assume that $b_2=e_n$, where $e_n$ denotes the $n$th canonical vector in $\R^n$. Indeed, we can take an orthogonal transformation $V\in\R^{n\times n}$ so that $Vb_2=e_n$, $V(P)$ and $V(Q)$ remain being revolution cones of half-angle aperture $\phi_1$ and $\phi_2$, respectively, and $\Theta(V(P),V(Q))=\Theta(P,Q)$ because of Proposition\,\ref{pr:elementary}(b). Thus, $Q$ can be written as 
\[Q=\{(\tv,v_n)\in\R^{n-1}\times\R:\Vert\tv\Vert\leq v_n\tan\phi_2\},\]
where $\tilde v:=(v_1,\ldots,v_{n-1})$ for $v=(v_1,\ldots,v_n)\in\R^n$. Indeed,
\begin{eqnarray*}
v\in Q  &\Leftrightarrow&  \langle e_n,v\rangle\geq\Vert v\Vert\cos\phi_2 \; \Leftrightarrow \;v_n\geq 0,\, v_n^2\geq (\Vert \tilde v\Vert^2+v_n^2)\cos^2\phi_2 \\
&\Leftrightarrow  &v_n\geq 0,\,(\tan^2\phi_2)v_n^2\geq \Vert \tilde v\Vert^2\; \Leftrightarrow \;\Vert\tv\Vert\leq v_n\tan\phi_2.
\end{eqnarray*}
From \cite[Example\,9.1]{Bauschke}, for $u=(\tilde u,u_n)\in P\cap S_n$, we have
\begin{equation}\label{proj:rev}
\proj_{Q\cap S_n}(u)=\begin{cases}
u,&\text{if }u\in Q,\\
\left\{(\tilde w,\cos\phi_2):\Vert\tilde w\Vert=\sin\phi_2\right\},&\text{if }u=(0,-1),\\
\left(\frac{\sin\phi_2}{\Vert\tu\Vert}\tu,\cos\phi_2\right),&\text{otherwise}.
\end{cases}
\end{equation}
From Proposition\,\ref{prop FQ}(e) we have that $F_Q(u)=\langle u,v'\rangle$ for any $v'\in\proj_{Q\cap S_n}(u)$. Hence,
\begin{equation}\label{FQrev}
F_Q(u)=\begin{cases}
1,&\text{if }u\in Q,\\
-\cos(\phi_2),&\text{if }u=(0,-1),\\
\Vert\tu\Vert\sin\phi_2 + u_n\cos\phi_2,&\text{otherwise}.
\end{cases}
\end{equation}
Let $\beta:=\arccos\langle b_1,b_2\rangle\in[0,\pi].$ We proceed to solve the problem \eqref{minsupport}, which becomes: 
\begin{equation}\label{prob:rev}
\mathfrak{s}(P,Q)=\min_u F_Q(u)\quad
\text{s.t.}\quad\langle b_1, u\rangle\geq\cos \phi_1,\;\Vert u\Vert=1.
\end{equation}
We consider three cases according to the values of $\phi_1$, $\phi_2$ and $\beta$:\\
\emph{Case 1.} Suppose that $\beta+\phi_1\leq\phi_2$. It means that $P\subseteq Q$. Then, from Proposition\,\ref{prop:extreme} we deduce $\mathfrak{s}(P,Q)=1$.\\
\emph{Case 2.} Suppose that $\pi\leq\beta+\phi_1$. Then $\bu:=(0,-1)\in P$, and from \eqref{FQrev} we have $F_Q(\bu)= -\cos(\phi_2)$. Furthermore, notice that $-\cos(\phi_2)\leq F_Q(u)$ for all $u\in P\cap S_n$. Indeed, it comes directly from \eqref{FQrev} when $u\in Q$ or $u=(0,-1)$. For the remaining case, it comes from
\[F_Q(u)+\cos\phi_2=\Vert\tu\Vert\sin\phi_2 + (u_n+1)\cos\phi_2\geq 0,\]
where the inequality is because $-1\leq u_n\leq 1$ since $\Vert u\Vert=1$. Therefore, $\mathfrak{s}(P,Q)=-\cos(\phi_2)$.\\
\emph{Case 3.} Suppose that $\phi_2<\beta+\phi_1<\pi$. Then, $P\not\subseteq Q$ and $(0,-1)\notin P$, which imply $|\mathfrak{s}(P,Q)|\neq 1$, see Proposition\,\ref{prop:extreme}. Furthermore, because of \eqref{proj:rev}, those facts also imply that $\proj_{Q\cap S_n}(u)$ is a singleton for any optimal point $u$ of \eqref{prob:rev}. Now, let $L$ be the smallest linear subspace containing $P$ and $Q$. Observe that $L=\mathtt{span}\{b_1,b_2\}$ when $\phi_1=\phi_2=0$, and $L=\R^n$ otherwise. Hence, from Proposition\,\ref{boundary} we have that any solution to problem \eqref{prob:rev} is in the boundary of $P$ relative to $L$. Thus, this problem can be written as
\begin{equation}\label{boundary0}
\min_{u} \langle u, v(u)\rangle,\quad\text{s.t. }\langle b_1, u\rangle=\cos \phi_1,\;\Vert u\Vert=1,
\end{equation}
where $v(u):=\left(\frac{\sin\phi_2}{\Vert\tu\Vert}\tu,\cos\phi_2\right)$ is the projection of $u$ onto $Q\cap S_n$, see \eqref{proj:rev} and \eqref{FQrev}.  Suppose that $b_1=b_2$. In this case, $u=(\tu,u_n)\in S_n$ is on the boundary of $P$ if and only if $\Vert\tu\Vert=\sin\phi_1$ and $u_n=\cos\phi_1$. Therefore, \[F_Q(u)=\Vert\tu\Vert\sin\phi_2 + u_n\cos\phi_2=\sin\phi_1\sin\phi_2+\cos\phi_1\cos\phi_2=\cos(\phi_1-\phi_2).\] Thus, $\mathfrak{s}(P,Q)=\cos(\phi_1-\phi_2)$. Suppose that $b_1\neq b_2$, then $\{b_1,b_2\}$ is linear independent since $b_1\neq-b_2$ because of $-b_2=(0,-1)\notin P$. The KKT optimality conditions for $u$ to be solution of \eqref{boundary0} say that there exits $\lambda_1,\lambda_2\in\R$ such that 
\begin{equation}\label{linear1}
v(u)=\lambda_1 b_1+\lambda_2 u.
\end{equation}
On the other hand, since the optimal $u$ is not in $Q$, otherwise $\mathfrak{s}(P,Q)=1$, $v(u)$ is on the boundary of $Q$. Then, $v(u)$ solves $\max_v\langle u,v\rangle$ s.t. $\langle b_2, v\rangle=\cos \phi_2,\;\Vert v\Vert=1$. Hence, the KKT optimality conditions for $v(u)$ say that there exist $\mu_1,\mu_2\in\R$ such that 
\begin{equation}\label{linear2}
u=\mu_1 b_2+\mu_2 v(u).
\end{equation}
By combining \eqref{linear1} and \eqref{linear2} we deduce that $u,v(u)\in\mathtt{span}\{b_1,b_2\}$. Therefore, the problem \eqref{boundary0} becomes a 2-dimensional problem. Let $\tilde b_1:=(\csc\beta) b_1-(\cot\beta)b_2$, then $\{\tilde b_1,b_2\}$ is an orthonormal basis of $\mathtt{span}\{b_1,b_2\}$. There are only two unit vectors on the boundary of $P$ that are in $\mathtt{span}\{b_1,b_2\}$: $\hat u:=(\sin(\beta-\phi_1))\tilde b_1+(\cos(\beta-\phi_1))b_2$ which gives $F_Q(\hat u)=\cos(\beta-\phi_1-\phi_2)$, and $\bu:=(\sin(\beta+\phi_1))\tilde b_1+(\cos(\beta+\phi_1))b_2$ which gives $F_Q(\bu)=\cos(\beta+\phi_1-\phi_2)$. Since $F_Q(\bu)\leq F_Q(\hat u)$, then  $\mathfrak{s}(P,Q)=\cos(\beta+\phi_1-\phi_2)$.
Summarizing, we have obtained:
\[\mathfrak{s}(P,Q)=
\begin{cases}
\cos(0),&\text{if }\beta+\phi_1\leq\phi_2,\\
\cos(\pi-\phi_2),&\text{if }\pi\leq\beta+\phi_1,\\
\cos(\beta+\phi_1-\phi_2),&\text{if }\phi_2<\beta+\phi_1<\pi.
\end{cases}
\]
From this, we obtain the announced formula for $\Theta(P,Q)$.
\end{proof}

\section{Computing the dissimilarity measure: the polyhedral setting}\label{sec:poly}

In this section, $P,\,Q\in\mathcal C_n$ are assumed to be polyhedral cones. That is, they are convex, closed, and finitely generated cones. Hence, without loss of generality, we assume that
\begin{equation}\label{polyhedral}
P=G (\mathbb R^p_+) = \{ Gx \ | \ x \geq 0 \} 
\quad \mbox{and} \quad 
Q=H (\mathbb R^q_+)  = \{ Hy \ | \ y \geq 0 \}, 
\end{equation}
where $G=[g_1,\ldots,g_p]\in\R^{n\times p}$ and $H=[h_1,\ldots,h_q]\in\R^{n\times q}$ are matrices whose columns 
are unitary, that is, $\|g_i\| = 1$ for all $i$ and $\|h_j\| = 1$ for all $j$. 

To compute $F_Q(u)$, we need to describe $\proj_{Q\cap S_n}(u)$. This is given in the next proposition, which is borrowed from \cite[Corollary\,8.6]{Bauschke}. For a vector $w\in\R^n$, $ \max (w)$ denotes the largest entry of $w$, and for vectors vectors $w_1,\ldots,w_k\in\R^n$, $\mathtt{cone}\{w_1,\ldots,w_k\}:=\{\alpha_1 w_1+\cdots+\alpha_k w_k:\alpha_i\geq 0\}.$
\begin{proposition}({\cite[Corollary\,8.6]{Bauschke}})\label{prop:FQ2}
Let $Q$ be a polyhedral cone as in \eqref{polyhedral}, and let $u\in S_n$. Let $J(u):=\mathop{\rm arg\,max}\limits_{j=1,\ldots,q} \,\langle u,h_j\rangle$. Then, $\partial F_Q(u) = \mathtt{conv}\, \proj_{Q\cap S_n}(u)$, where,
\[
\proj_{Q\cap S_n}(u)=
\left\{
\begin{array}{ll}
     \displaystyle \frac{\proj_{Q}(u)}{\norm{\proj_{Q}(u)}}& \mbox{ if } \max( H^\top u) >0,  \\
     \mathtt{cone}\{h_j:j\in J(u)\}\cap S_n& \mbox{ if } \max( H^\top u) =0,  \\
     \{h_j:j\in J(u)\} & \mbox{ if } \max( H^\top u)< 0 .
\end{array}
\right.
\]
\end{proposition}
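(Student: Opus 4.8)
The plan is to work directly from the identity $\proj_{Q\cap S_n}(u)=\argmax_{v\in Q\cap S_n}\inner{u}{v}$ provided by Proposition~\ref{prop FQ}(c), splitting the analysis into the three regimes dictated by the sign of $\max(H^\top u)$. The first thing to record is that, since $Q=\mathtt{cone}\{h_1,\dots,h_q\}$, one has $u\in Q^\circ$ if and only if $\inner{u}{h_j}\le 0$ for every $j$, i.e.\ if and only if $\max(H^\top u)\le 0$; hence the case $\max(H^\top u)>0$ is exactly $u\notin Q^\circ$. The identity $\partial F_Q(u)=\mathtt{cl\, conv}\,\proj_{Q\cap S_n}(u)$ is already Proposition~\ref{prop FQ}(d), so the only thing requiring an argument is the explicit three-case description of $\proj_{Q\cap S_n}(u)$.

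\emph{Case $\max(H^\top u)>0$.} Here $u\notin Q^\circ$, so $z:=\proj_Q(u)\neq 0$ (otherwise $u\in N_Q(0)=Q^\circ$). From the projection inequality $\inner{u-z}{w-z}\le 0$ for all $w\in Q$, applied with $w=0$ and $w=2z$, one gets $\inner{u-z}{z}=0$, and with an arbitrary $w=v\in Q$ one gets $\inner{u-z}{v}\le 0$. Thus for any $v\in Q\cap S_n$, $\inner{u}{v}=\inner{u-z}{v}+\inner{z}{v}\le\inner{z}{v}\le\norm{z}=\inner{u}{z/\norm{z}}$, with equality forcing $v=z/\norm{z}$ by the Cauchy--Schwarz equality case. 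Hence $\proj_{Q\cap S_n}(u)=\{\proj_Q(u)/\norm{\proj_Q(u)}\}$; alternatively one may simply quote Proposition~\ref{prop FQ}(f).

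\emph{Cases $\max(H^\top u)\le 0$.} Write any nonzero $v\in Q$ as $v=\sum_{j=1}^q\lambda_j h_j$ with $\lambda_j\ge 0$ not all zero; since $\norm{h_j}=1$, the triangle inequality gives $\sum_j\lambda_j\ge\norm{v}=1$ whenever $v\in S_n$. Setting $m:=\max_i\inner{u}{h_i}\le 0$, we obtain $\inner{u}{v}=\sum_j\lambda_j\inner{u}{h_j}\le m\sum_j\lambda_j\le m$, where the last inequality uses $m\le 0$ together with $\sum_j\lambda_j\ge 1$; since $v=h_j$ with $j\in J(u)$ attains $\inner{u}{v}=m$, it follows that $F_Q(u)=m$. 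If $m=0$, equality forces $\lambda_j=0$ for every $j$ with $\inner{u}{h_j}<0$, i.e.\ $v\in\mathtt{cone}\{h_j:j\in J(u)\}$, and conversely every unit vector in this cone realizes $\inner{u}{v}=0=F_Q(u)$; hence $\proj_{Q\cap S_n}(u)=\mathtt{cone}\{h_j:j\in J(u)\}\cap S_n$. If $m<0$, equality additionally forces $\sum_j\lambda_j=1$ (because $m\neq 0$), so $\sum_j\lambda_j\norm{h_j}=1=\norm{v}$ is an equality in the triangle inequality, whence all $h_j$ with $\lambda_j>0$ coincide; therefore $v=h_j$ for some $j\in J(u)$, giving $\proj_{Q\cap S_n}(u)=\{h_j:j\in J(u)\}$.

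The step I expect to be the main obstacle is the equality analysis in the regime $\max(H^\top u)<0$: one must chain together the support condition on the $\lambda_j$, the normalization $\sum_j\lambda_j=1$, and the equality case of the generalized triangle inequality for unit vectors in order to rule out genuine conic combinations and leave only the extreme generators. Some care is also needed because the representation $v=\sum_j\lambda_j h_j$ is not unique, so one fixes an arbitrary such representation and checks that the conclusion is independent of the choice.
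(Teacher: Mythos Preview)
Your proof is correct. The paper does not supply its own argument for this proposition; it simply quotes the result from \cite[Corollary\,8.6]{Bauschke} and moves on. Your approach is a self-contained alternative that relies only on Proposition~\ref{prop FQ} already established in the paper, together with the standard characterizations of projection onto a convex cone and the equality case of the triangle inequality. The case split by the sign of $\max(H^\top u)$ is exactly the right decomposition, and the equality analysis you flag as the delicate step in the regime $m<0$ is handled correctly: once $\sum_j\lambda_j=1$ and $\lambda_j=0$ for $j\notin J(u)$ are forced, the equality case of $\|\sum_j\lambda_j h_j\|\le\sum_j\lambda_j\|h_j\|$ indeed collapses the conic combination to a single generator, and this conclusion does not depend on which representation of $v$ one starts from. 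Compared with simply citing Bauschke--Bui--Wang, your argument has the advantage of being elementary and internal to the paper's framework.
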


This section deploys an optimization approach to tackle problem~\eqref{maxminPQ} by dealing with its equivalent problem of minimizing the support function given in \eqref{minsupport}; that is, $\min_{u\in P\cap S_n}F_Q(u)$ (recall that this problem has a solution because $F_Q$ is continuous --convex and real-valued-- and $P\cap S_n$ is compact).
Although the problem of defining $F_Q$ is nonconvex, Propositions~\ref{prop FQ}(e) and~\ref{prop:FQ2} show that, in the polyhedral setting—and in the most complex scenario\footnote{If $\max( H^\top u)\leq 0$ any generator $h_{i^*}$ with $i^*\in J(u)=\arg\max_{j=1,\ldots,q} \, \langle u,h_j\rangle$ is a global solution.}—computing a global solution reduces to solving a strictly convex quadratic problem yielding $\proj_Q(u)$ (and then normalizing). When solving such a subproblem, a subgradient of $F_Q$ at $u$ is readily available in Proposition\,\ref{prop:FQ2}.

Despite this useful property, solving problem~\eqref{minsupport} (and thus \eqref{maxminPQ}) remains a challenging nonsmooth, nonconvex optimization problem. In what follows, we investigate an approach to tackle this problem: a global optimization methodology that can be conveniently converted to a local-solution strategy.

\subsection{Cutting-plane approach}
Given that function $F_Q$ in~\eqref{support} is convex but nonsmooth, we may solve the problem by a cutting-plane method that, at iteration $k$, replaces $F_Q(u)$ with the cutting-plane model:
\[
\check F_Q^k(u)=\max_{j=1,\ldots,k} \{F_Q(u^j)+\inner{v^j}{u-u^j}\}\leq F_Q(u).
\]
In this notation,  $ v^j \in \proj_{Q\cap S_n}(u^j)\subseteq\partial F_Q(u^j)$ is as in Proposition~\ref{prop:FQ2}. Thus, $F_Q(u^j)=\inner{ v^j}{u^j}$, see Proposition\,\ref{prop FQ}(e), and the cutting-model boils down to 
\[
\check F_Q^k(u)=\max_{j=1,\ldots,k} \inner{ v^j}{u}.
\]
Accordingly, we get the following master program
\[
    u^{k+1} \in \arg\min_{u \in P} \; \check F_Q^k(u) \quad \mbox{s.t.}\quad \norm{u}=1,
\]
which is equivalent to the following quadratic-constrained problem
\begin{equation}\label{master}
    \left\{
    \begin{array}{llll}
    \displaystyle \min_{(u, r) \in \R^{n+1}} & r \\
     \mbox{s.t.}&  u\in P, \; \norm{u}^2=1\\
                & \inner{v^j}{u}\leq r,\; j=1,\ldots,k.
    \end{array}
    \right.
\end{equation}
Commercial solvers such as Gurobi can globally and efficiently solve problem~\eqref{master} in low dimensions, typically up to a few dozen variables.

Let $(u^{k+1},r^{k+1})$ be a solution to this problem. Since the cutting-plane model is a lower approximation of the convex function $F_Q$, we get that $r^{k+1}$ is a lower bound on $\cos[\Theta(P,Q)]$, the optimal value of~\eqref{minsupport}. As any feasible point gives an upper bound, the difference between these bounds gives a way to terminate the cutting-plane approach presented in Algorithm~\ref{CP}.

\begin{algorithm}
\caption{Cutting-plane algorithm for computing the angle $\Theta(P,Q)$}
\label{CP}
\begin{algorithmic}[1]
\STATE  Given $\mathtt{tol}> 0$ and $u^1 \in P\cap S_n$, compute $v^1 \in  \proj_{Q\cap S_n}(u^1)$ 
\FOR{$k=1,2,3,\ldots$}  
\STATE Let $(u^{k+1},r^{k+1})$ be a solution to the master program~\eqref{master} \label{lin:master}
\STATE Compute  $v^{k+1} \in \proj_{Q\cap S_n}(u^{k+1})$
\IF{$\inner{u^{k+1}}{v^{k+1}}-r^{k+1}\leq \mathtt{tol}$}
\STATE Stop: return the angle $\arccos{\inner{ u^{k+1}}{v^{k+1}}}$
\ENDIF
\ENDFOR
  \end{algorithmic}
\end{algorithm}

Note that computing $v^{k+1}$ is a relatively simple task: according to Proposition~\ref{prop:FQ2}, in the worst case, it amounts to projecting $u^{k+1}$ onto $Q$ (a convex programming problem) followed by normalization.
Algorithm~\ref{CP} is a specialized version of the celebrated Kelley's cutting-plane \cite{Kelley_1960}, which is known to compute a global solution provided the objective function is a real-valued convex function and the feasible set is compact (not necessarily convex).
The interested reader is referred to \cite[Ch.~10]{WWBook} for a more general cutting-plane method and its convergence analysis.
\begin{theorem}\label{theo:CP}
    Consider Algorithm~\ref{CP} with $\mathtt{tol} =0$. Then either the algorithm stops at iteration $k$ with a global solution $u^{k+1}$ to~\eqref{minsupport},  $(u^{k+1},v^{k+1})$  solving~\eqref{maxminPQ} and $r^{k+1}=\inner{u^{k+1}}{v^{k+1}}=\cos[\Theta(P,Q)]$, or the algorithm continues indefinitely. In the latter case, every cluster point $\bar u$ of the sequence $\{ u^{k}\}$ produced by the algorithm is a global solution to~\eqref{minsupport},  $(\bar u, \bar v)$ with $\bar v \in \proj_{Q\cap S_n}(\bar u)$ solves~\eqref{maxminPQ} and $\lim_{k} r^k =\cos[\Theta(P,Q)]$.
\end{theorem}
\begin{proof}
If the algorithm stops at $k$, then $\cos[\Theta(P,Q)]\leq F_Q(u^{k+1})= \inner{u^{k+1}}{v^{k+1}} $ $= r^{k+1}= \min_{u \in P\cap S_n} \check F^k_Q(u) \leq \min_{u \in P\cap S_n} F_Q(u)=\cos[\Theta(P,Q)] $. Therefore, $u^{k+1}$ solves \eqref{minsupport} and  $(u^{k+1},v^{k+1})$  solves~\eqref{maxminPQ} due to the equivalence between~\eqref{minsupport} and \eqref{maxminPQ}. Suppose now that the algorithm loops indefinitely. Let $\bar u$ be an arbitrary cluster point of the (bounded) sequence $\{u^k\}$, and let $\mathcal{K}$  be an index set such that $ \bar u= \lim_{\mathcal{K}\ni k \to \infty} u^k$.
    The fact that $\bar u$ solves~\eqref{minsupport} follows from the standard analysis of the cutting-plane method; see for instance \cite[Th.~10.1]{WWBook}.
     Indeed, one can show that the optimality gap $F_Q(u^{k})-\check F^{k-1}_Q(u^{k})\geq F_Q(u^{k}) - \cos[\Theta(P,Q)]\geq 0$ vanishes along the iterations $k \in \mathcal{K}$ (see the proof of Theorem~\ref{theo:CP2} below for more insights). Hence, continuity of $F$ implies that $F(\bar u)=\lim_{\mathcal{K} \ni k\to \infty} F(u^{k})= \cos[\Theta(P,Q)]$, the optimal value of~\eqref{minsupport}.
     Let $\mathcal{K}$  be an index set such that $ \bar u= \lim_{\mathcal{K}\ni k \to \infty} u^k$. As the projection onto a nonempty closed set is an outer-semicontinuous operator \cite[Example 5.23]{Rock-Wets}, we get that  any cluster point $\bar v$ of $\{v^k\}_{\mathcal{K}}$ belongs to $ \proj_{Q\cap S_n}(\bar u)$. Hence, Proposition~\ref{prop FQ} items (c) and (e), and the equivalence of problems~\eqref{maxminPQ} and \eqref{minsupport}  conclude the proof.
\end{proof}

When the solver used in Algorithm~\ref{CP} (line~\ref{lin:master}) returns a stationary point instead of solving the master problem globally, Algorithm~\ref{CP} still asymptotically computes a pair of points that satisfy the necessary optimality conditions given in Proposition~\ref{prop:stat}, as demonstrated by the following result.
\begin{theorem}\label{theo:CP2}
    Consider Algorithm~\ref{CP} with $\mathtt{tol} =0$ and suppose that, for all iterations $k$, $u^{k+1}$ is a stationary point to
    \[
    \min_{u \in P} \; \check F_Q^k(u) \quad \mbox{s.t.}\quad \norm{u}=1,
    \]
    and $r^{k+1}= \check F_Q^k(u^{k+1})$. 
    Then either the algorithm stops at iteration $k$ with a pair  $(u^{k+1},v^{k+1})$ satisfying the necessary optimality conditions~\eqref{optim}, or the algorithm continues indefinitely. In the latter case,
    let $\bar u$ be an arbitrary cluster point of the sequence $\{u^{k}\}$ produced by the algorithm. Then $(\bar u, \bar v)$ with $\bar v \in \proj_{Q\cap S_n}(\bar u)$ satisfies~\eqref{optim}. 
\end{theorem}
\begin{proof}
    By proceeding as in the first part of the proof of Proposition~\ref{prop:stat} (with $F_Q$ replaced with $\check F_Q^k$), we conclude that $u^{k+1}$ satisfies 
    \[
\left\{
\begin{array}{llll}
P&\ni u^{k+1} \perp   v^{k+1}  - \inner{u^{k+1}}{ v^{k+1}} u^{k+1} \in P^*,\\
\partial \check F^k_Q(u^{k+1}) &\ni   v^{k+1}  ,\;\; \norm{u^{k+1}} =1.
\end{array}
\right.
\]
Since $\check F^k_Q \leq F_Q$ are convex functions, we have that
any $\check v \in \partial \check F_Q^k(u^{k+1})$ belongs to the approximate subdifferential $\partial_{e_k} F_Q(u^{k+1}) :=\{v:\, F_Q(u^{k+1}) +\inner{v}{u-u^{k+1}}-e_k \leq F_Q(u)\}$,  with $e_k:=  F_Q(u^{k+1}) - \check F_Q^k(u^{k+1}) \geq 0$. Hence, the above system implies
    \begin{equation}\label{aux0}
\left\{
\begin{array}{llll}
P&\ni u^{k+1} \perp   v^{k+1}  - \inner{u^{k+1}}{v^{k+1}} u^{k+1} \in P^*,\\
\partial_{e_k} F_Q(u^{k+1}) &\ni  v^{k+1}  ,\;\; \norm{u^{k+1}} =1.
\end{array}
\right.
\end{equation}
If the algorithm stops at iteration $k$, then $e_k=0$. As $v^{k+1} \in \partial F_Q(u^{k+1})$, Proposition~\ref{prop FQ} items (c) and (d) asserts that $v^{k+1}  \in \mathtt{conv}\, \arg\max_{v\in Q\cap S_n}\inner{u^{k+1}}{v}$.
Thus, $(u^{k+1},v^{k+1})$ satisfies~\eqref{optim}. Suppose now that the algorithm loops indefinitely.
Next we prove that $\lim_k e_k =0$. To this end, observe that for any arbitrary index $i \in \{1,\ldots,k-1\}$,
\begin{align*}
0 & \leq e_k=  F_Q(u^{k+1}) - \check F_Q^k(u^{k+1}) = [F_Q(u^{k+1})-\check F^k_Q(u^{i})] +[\check F^k_Q(u^{i}) - \check F_Q^k(u^{k+1})]\\
& =  [F_Q(u^{k+1})-F_Q(u^{i})] +[\check F^k_Q(u^{i}) - \check F_Q^k(u^{k+1})]\\ 
&\leq \norm{u^{k+1}-u^i}+ \norm{u^{k+1}-u^i},
\end{align*}
where the last  equality follows from the fact that $i<k+1$ (thus $\check F^k_Q(u^{i}) = F_Q(u^{i})$), and the inequality holds because both $F_Q$ and $\check F^k_Q$ are Lipschitz continuous with Lipschitz constant equal to one. This last result follows from Proposition~\ref{prop FQ}(b) and the fact that, being a cutting-plane function, $\partial \check F^k_Q(u)$ is contained in the convex hull of $\{v^1,\ldots, v^k\}$.
As a result, if there would exist $\epsilon >0$ such that $\epsilon \leq  e_k$ for all $k$ large enough, then the bounded sequence $\{u^k\}$ would not have a convergent subsequence, which is impossible in $\R^n$. 
Hence, $\lim_k e_k =0$. (Thus the algorithm terminates after finitely many iterations provided $\mathtt{tol}>0$.)

Let $\mathcal{K} \subset \mathbb{N}$ be such that $\lim_{\mathcal{K} \ni k \to \infty} u^{k+1} = \bar u$.
As the approximated subdifferential is locally bounded \cite[Lem. 2.10]{WWBook} and $\lim_k e_k =0$, there exists $\mathcal{K}' \subset \mathcal{K}$ and $\bar v $ such that $\lim_{\mathcal{K}'\ni k \to \infty} v^{k+1}=\bar v$.
By passing to the limit as $k \in \mathcal{K}'$ in the system~\eqref{aux0} and recalling that the approximate subdifferential, the normal cone, and the projection onto a nonempty closed set are outer-semicontinuous \cite[Prop 2.7, Lem. 3.2]{WWBook} and \cite[Example 5.23]{Rock-Wets}, we conclude that
\[
\left\{
\begin{array}{llll}
P&\ni \bar  u \perp  \bar  v  - \inner{\bar u}{\bar  v} \bar u \in P^*,\\
\partial F_Q(\bar u) &\ni  \bar v  ,\;\; \norm{\bar u} =1.
\end{array}
\right.
\]
As $\bar v \in \partial F_Q(\bar u)$, Proposition~\ref{prop FQ}(c)-(d)  asserts that $\bar v  \in \mathtt{conv}\, \arg\max_{v\in Q\cap S_n}\inner{\bar u}{v}$. Hence,  the pair $(\bar u,\bar v)$ satisfies the necessary optimality conditions in~\eqref{optim}.
\end{proof}

Note that the master program in the above theorem is a nonsmooth nonconvex optimization problem. To compute a stationary point $u^{k+1}$, we may consider the smooth reformulation~\eqref{master} and apply standard NLP solvers. 

\subsubsection*{Warm start}
Employing a global optimization solver for the master program \eqref{master} in Algorithm~\ref{CP} can lead to prohibitive computational times, even when the problem dimension $n$ is only a few dozen. To warm-start the algorithm (even in the local-solution setting described in Theorem~\ref{theo:CP2}) we adopt a simple yet effective strategy outlined in Algorithm~\ref{Heur}. This approach aims to provide a good-quality initial point along with an initial cutting-plane model. Specifically, we begin by examining the generators of the set $P$.\begin{algorithm}
\caption{Warm-start for the cutting-plane method}
\label{Heur}
\begin{algorithmic}[1]
\STATE  Let  $G \in \R^{n\times p}$ and $H\in \R^{n\times q}$ be the matrices of generators of cones $P$ and $Q$
\FOR{$j=1,2,\ldots, p$} 
\STATE Set $u^j\gets g_j$ and $v^j\in \proj_{Q\cap S_n}(u^j)$
\ENDFOR
\STATE Let $i^*\in \displaystyle \arg\min_{j=1,\ldots,p} \inner{u^j}{v^j}$
\RETURN $\{v^1,\ldots, v^{p}\}$ and angle $\arccos \inner{u^{i^*}}{v^{i^*}}$ approximating $\Theta(P,Q)$ 
  \end{algorithmic}
\end{algorithm}

Observe that the vectors $v^j \in \proj_{Q\cap S_j}(g_j)$ can be used to set up an initial cutting-plane for Algorithm~\ref{CP}.
We highlight that a solution of~\eqref{minsupport} need not be a generator of $P$, thus this strategy is a mere heuristic for computing the angle $\Theta(P,Q)$.

\section{Numerical experiments}\label{sec:numexp}
We continue our focus on polyhedral cones, organizing this section into two subsections based on the classes of problems considered. The first subsection presents preliminary results from applying Algorithm~\ref{CP} to randomly generated problems of small dimensions, with the aim of comparing the global and local variants of the algorithm. The second subsection demonstrates the application of our local-solution strategy to an image-set classification problem. In both cases, and for both algorithmic variants, Algorithm~\ref{CP} is warm-started using the heuristic described in Algorithm~\ref{Heur}.

The numerical experiments were conducted in \texttt{MATLAB} on a personal computer: 12th Gen Intel(R) Core(TM) i9 clocked at 2.5 GHz (64 GB RAM). For the global variant, we used \gu~(version 10.0) \cite{Gurobi} to solve the master program~\eqref{master}, while the local-solution variant of Algorithm~\ref{CP} employed \texttt{MATLAB}’s \texttt{fmincon} with default parameters. To evaluate the support function $F_Q$ via item (e) of Proposition~\ref{prop FQ}, we have used \gu. Recall that to compute the dissimilarity measure of two cones $P$ and $Q$, we need to compute the two angles $\Theta(P,Q)$ and $\Theta(Q,P)$. The optimization problems yielding these angles were solved in parallel by using the Parallel Computing  Toolbox~\cite{ParallelComputingToolbox} of \matlab.
All test problems and source codes are available upon request.
\subsection{Random generated problems}
We present preliminary results of Algorithm~\ref{CP}, where the cutting-plane method is implemented by considering two strategies. In particular, to solve the master program~\eqref{master} of Algorithm~\ref{CP}, we either solve it up to global optimality by employing \gu~or we solve it locally by \texttt{fmincon} in \matlab. Accordingly, we denote the computed dissimilarity measures by $\Dis_r^{glob}$ and  $\Dis_r^{loc}$, respectively.
Since the problem yielding these dissimilarity measures is of the $\max$-$\min$ type, with the $\min$ operation always carried out globally, we have that $\Dis_r^{glob} \geq \Dis_r^{loc}$. In both cases, we set the tolerance for the stopping criterion in Algorithm~\ref{CP} to $10^{-3}$ and limited the maximum number of iterations to 200. Additionally, the CPU time allocated for solving the master problem was capped at 300 seconds, while the total CPU time allowed for Algorithm~\ref{CP} was restricted to 1200 seconds (i.e., 20 minutes).

We randomly generate pairs of cones with dimension $p=q=n$, with $n$ varying from $3$ to $25$ and considering $10$ matrices $G$ and $H$ for each dimension. The elements of the matrices $G$ and $H$ are drawn from the normal distribution with zero mean and unitary standard deviation. We set the matrix density to $0.5$ (e.g., half of the elements are set to zero). We use \matlab's \texttt{sprandn} function. For each dimension, we generate 10 different pairs of cones, resulting thus in 230 pairs of polyhedral cones for which we compute the dissimilarity measure.  
\begin{figure}
    \centering
         \subfigure[Dissimilarity measure]{
    \includegraphics[width=0.45\linewidth]{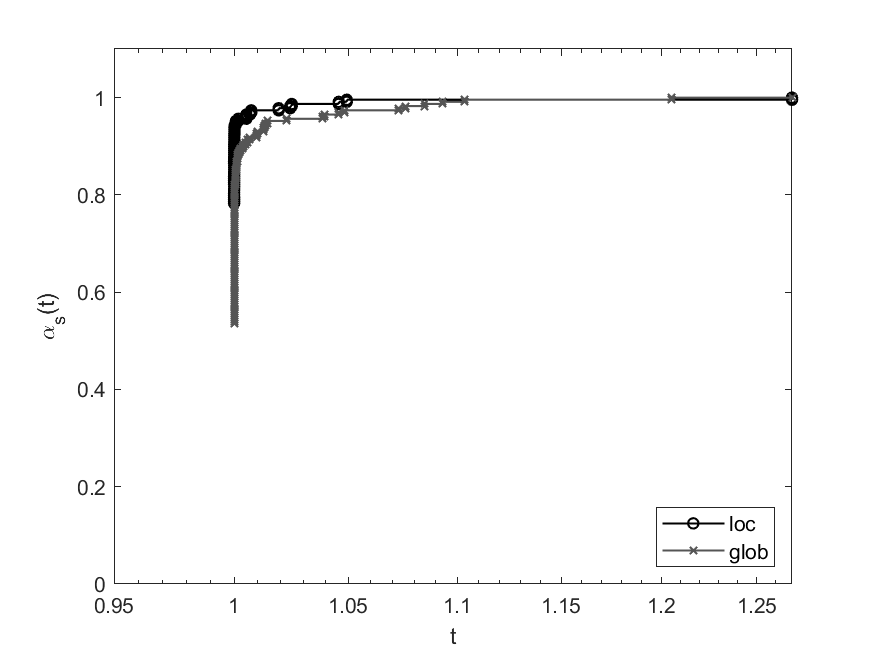}
    }
    \subfigure[CPU time]{
    \includegraphics[width=0.45\linewidth]{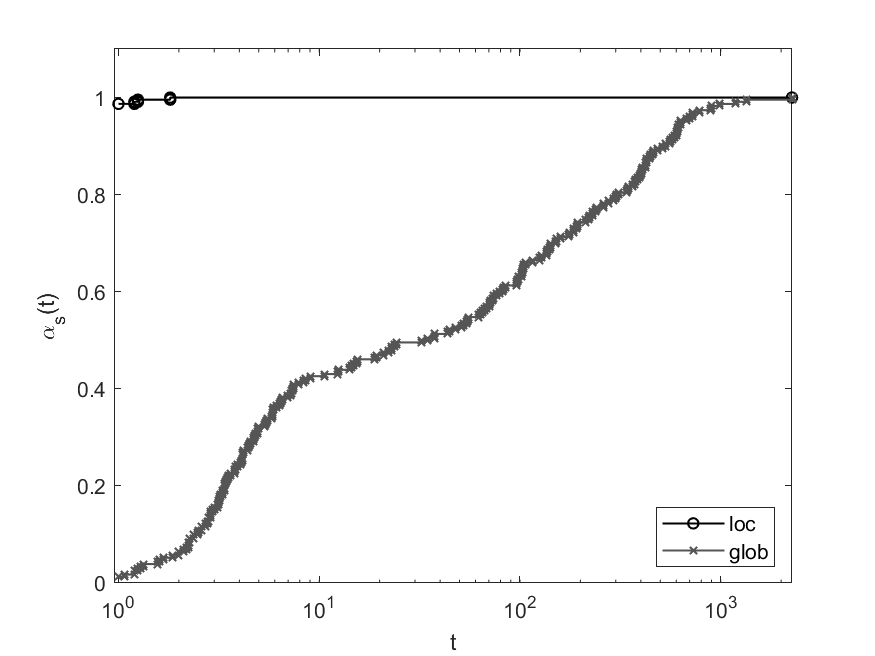}
    }
    \caption{Performance profile of Algorithm~\ref{CP} over 230 test instances, implementing a local or global strategy.}
    \label{fig:pp_alg1_tp1}
\end{figure}

\bigskip

\begin{figure}
    \centering
    \includegraphics[width=0.5\linewidth]{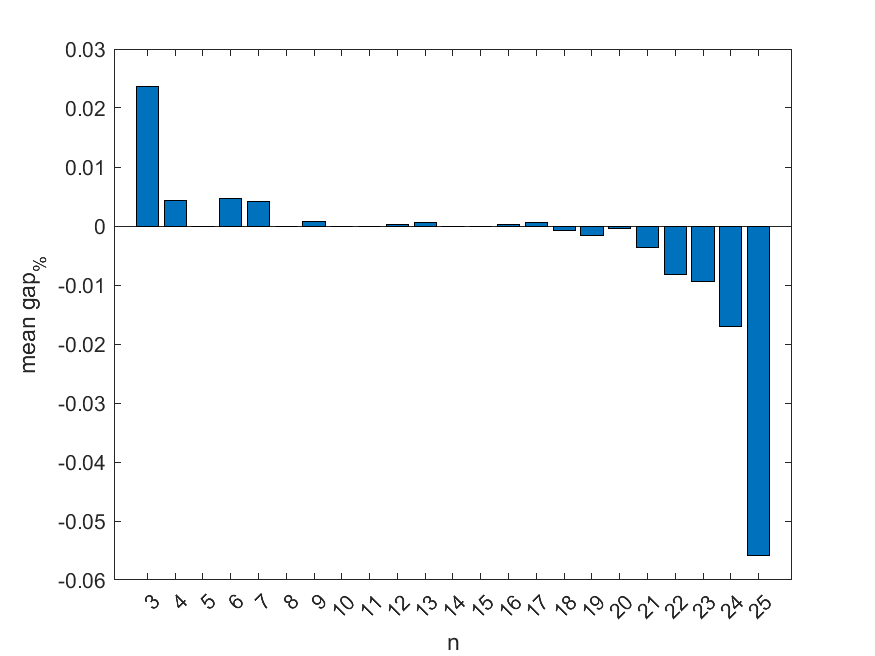}
    \caption{Mean relative gap per dimension over 230 test instances. }
    \label{fig:gap_alg1_tp1}
\end{figure}

Fig.~\ref{fig:pp_alg1_tp1} reports the performance profile~\cite{DoMo} of Algorithm~\ref{CP} over 230 test instances when implementing a local or global strategy for solving the master problem. For each strategy, we plot the proportion of problems for which the best value of the dissimilarity measure in~\eqref{DisrTheta} with $r=2$ was computed by one of the two approaches (leftmost)\footnote{Since smaller metric values indicate better performance in a performance profile, we use the inverse of the computed dissimilarity measure as the performance metric in Fig.~\ref{fig:pp_alg1_tp1}(a).}
 and that was solved within a factor of the best CPU time (rightmost). The two figures clearly show that the two strategies yield approximately the same value for the dissimilarity measure. However, the global one requires a much bigger effort in terms of computational time. For approximately $20\%$ of the instances, implementing the global strategy in Algorithm~\ref{CP} results in the maximum allowed CPU time being reached. The average CPU time required to solve the $230$ instances is $295$ seconds for the global approach, versus $1.2$ seconds for the local approach.  As a further analysis of the numerical results, in Fig.~\ref{fig:gap_alg1_tp1}, we show the mean relative gap per dimension computed considering the values of the dissimilarity measure in~\eqref{DisrTheta} with $r=2$. In particular, we compute
\begin{equation*}
    gap_{\%} = \frac{Dis^{glob}_2(P,Q) - Dis^{loc}_2(P,Q)}{Dis^{glob}_2(P,Q)}
\end{equation*}
for the $10$ instances of dimension $n$. The bar value is the mean over the $10$ considered instances. We can see that for the large instances ($n \geq 20$), the local strategy allows computing a better estimation of the dissimilarity measure. This is due to the fact that in several instances, \gu~reached the time limit of 20 minutes.

For further analysis, we also generate the elements of the matrices $G$ and $H$ as sampled from the uniform distribution in the interval $[0,1]$. We set the matrix density to $0.5$ and use \matlab's \texttt{sprand} function. The results, not reported in this work, show a close performance in terms of the quality of the solution for the two strategies (maximum mean relative gap of $0.01 \%$). However, as before, the global strategy requires significantly more computational effort. 

As a conclusion to these preliminary experiments, we suggest using the local strategy, particularly for problems with a large dimension. We present a practical application in the following subsection.  

\subsubsection{Image-set classification}\label{sec:class}
We consider the ETH-80 dataset~\cite{Chen2020}, which contains object images from 8 different classes, including apples, cars, cows, cups, dogs, horses, pears, and tomatoes. For each class, there are 10 different object instances (e.g., 10 different types of apples) with 41 images captured from different viewpoints. The image resolution is $256 \times 256$ pixels.

In Fig.~\ref{fig:10clas}, we display one image per object class from the same viewpoint, whereas Fig.~\ref{fig:10obj} shows the ten different object instances belonging to the class `apple'. Again, we show the images from the same viewpoint. On the other hand, Fig.~\ref{fig:diffvp} shows five among the 41 viewpoints for one object instance in the classes `cow' and `horse'.

\begin{figure}[H]
\centering
\subfigure[apple]{
\includegraphics[scale=0.22]{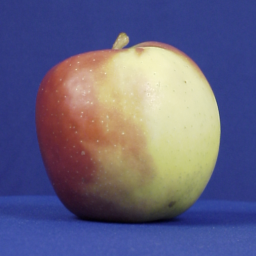}}
\subfigure[car]{
\includegraphics[scale=0.22]{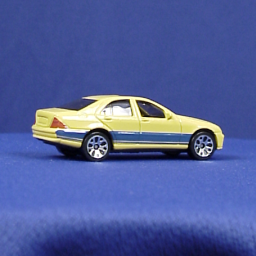}}
\subfigure[cow]{
\includegraphics[scale=0.22]{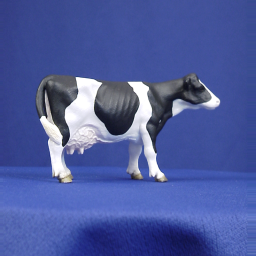}}
\subfigure[cups]{
\includegraphics[scale=0.22]{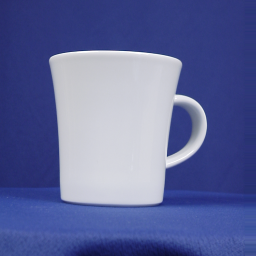}}\\
\subfigure[dog]{
\includegraphics[scale=0.22]{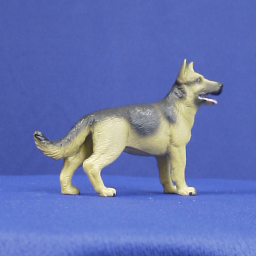}}
\subfigure[horse]{
\includegraphics[scale=0.22]{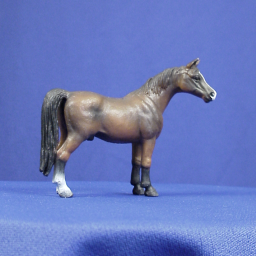}}
\subfigure[pear]{
\includegraphics[scale=0.22]{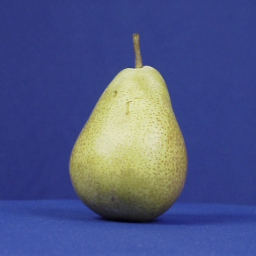}}
\subfigure[tomato]{
\includegraphics[scale=0.22]{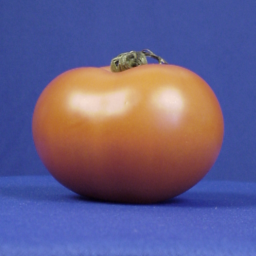}}
\caption{One image per one object in each class.}\label{fig:10clas}
\end{figure}

\begin{figure}[H]
\centering
\subfigure[Object 1]{
\includegraphics[scale=0.2]{figures/apple1-090-338}
}
\subfigure[Object 2]{
\includegraphics[scale=0.2]{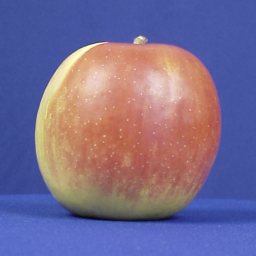}
}
\subfigure[Object 3]{
\includegraphics[scale=0.2]{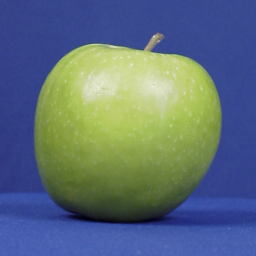}
}
\subfigure[Object 4]{
\includegraphics[scale=0.2]{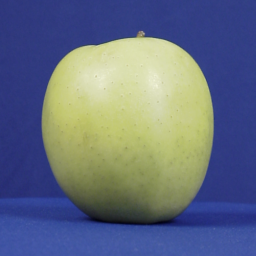}
}
\subfigure[Object 5]{
\includegraphics[scale=0.2]{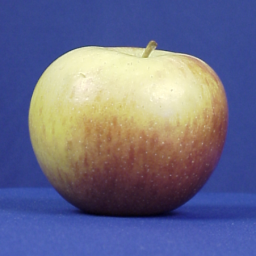}
}
\\
\subfigure[Object 6]{
\includegraphics[scale=0.2]{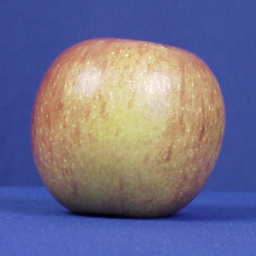}
}
\subfigure[Object 7]{
\includegraphics[scale=0.2]{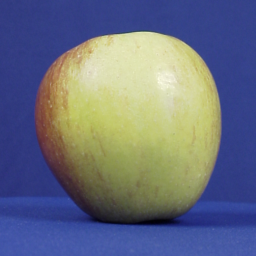}
}
\subfigure[Object 8]{
\includegraphics[scale=0.2]{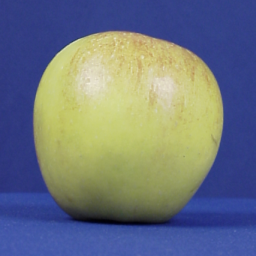}
}
\subfigure[Object 9]{
\includegraphics[scale=0.2]{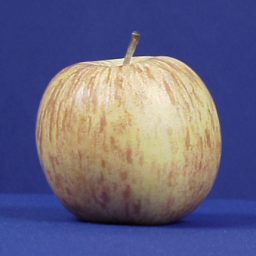}
}
\subfigure[Object 10]{
\includegraphics[scale=0.2]{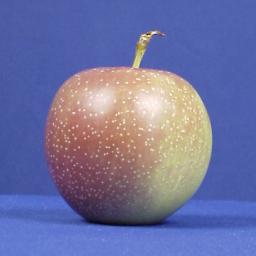}
}
\caption{One image per $10$ objects in one class.}\label{fig:10obj}
\end{figure}

\begin{figure}[H]
\centering
\subfigure[View point 1]{
\includegraphics[width=0.15\textwidth]{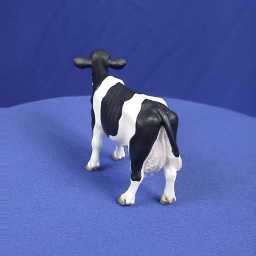}
}
\subfigure[View point 2]{
\includegraphics[width=0.15\textwidth]{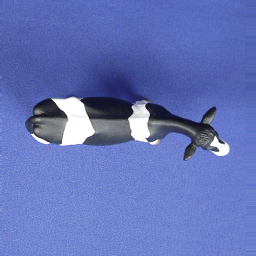}
}
\subfigure[View point 3]{
\includegraphics[width=0.15\textwidth]{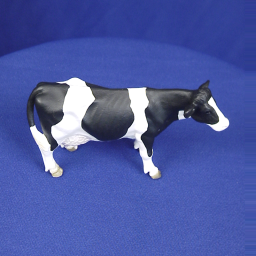}
}
\subfigure[View point 4]{
\includegraphics[width=0.15\textwidth]{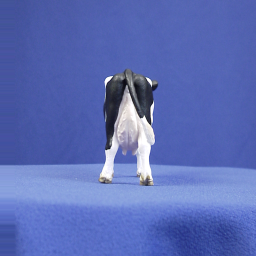}
}
\subfigure[View point 5]{
\includegraphics[width=0.15\textwidth]{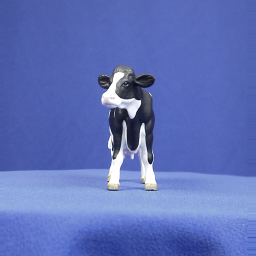}
}
\\
\subfigure[View point 1]{
\includegraphics[width=0.15\textwidth]{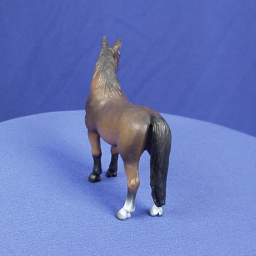}
}
\subfigure[View point 2]{
\includegraphics[width=0.15\textwidth]{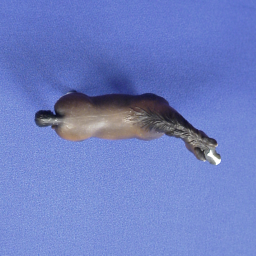}
}
\subfigure[View point 3]{
\includegraphics[width=0.15\textwidth]{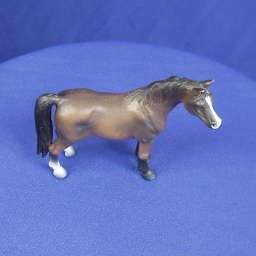}
}
\subfigure[View point 4]{
\includegraphics[width=0.15\textwidth]{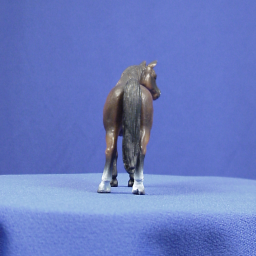}
}
\subfigure[View point 5]{
\includegraphics[width=0.15\textwidth]{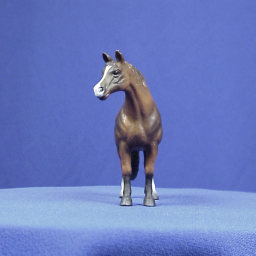}
}
\caption{Five images per one object in the cow and horse classes.}\label{fig:diffvp}
\end{figure}

We represent a set of images as a polyhedral cone. 
As defined in Section 3, to generate the cones $P$ and $Q$, we need to define the matrices $G$ and $H$, respectively. In the following experiments, a column of $G$ (or $H$) is a feature extracted from one image in a given set. We use Convolutional Neural Networks~\cite{Goodfellow2016} to extract features from the images. We use the pretrained deep network ResNet-18 from the Neural Network Toolbox of \matlab~\cite{MATLAB}, which constructs a hierarchical representation of input images, with deeper layers containing higher-level features built upon the lower-level features of earlier layers. We get the CNN features as output of the global pooling layer, `pool5', at the end of the network. The size of the CNN features is $512$.

Once the features are extracted for all the images, we implement the following setup for our experiments. For each class, we randomly select one object instance (i.e., 41 multi-view images) as a test image set. We indicate with $P_i$ the test cone for class $i$,  with $i = 1,2,...,8$. Each cone $P_i$ is generated by considering a matrix $G_i \in \R^{512 \times 41}$, whose columns are the CNN features generated for each of the $41$ images in the set. 

We indicate with $Q^k_j$, the training cone for class $j$ built by randomly drawing $k$ objects among the nine remaining ones. We have $j=1,...,8$ and consider $k = 1,...,5$. For each increase of $k$, we add one randomly chosen object to the current one in the training cone. The goal is to evaluate whether the classification accuracy improves when more information is given to build a cone representing a particular class. Each cone $Q^k_j$ is generated by considering a matrix $H^k_j \in \R^{512 \times 41*k}$, whose columns are generated as above but, in this case, considering $k$ number of objects, with $k=1,...,5$. 

For a fixed $i = i^0$ and $ k = k^0$, we compute $\textrm{Dis}_r(P_{i^0},Q^{k^0}_j)$ for each $j = 1,...,8$, and we classify $P_i$ as belonging to class $j^*$, where $j^* = \textrm{argmin}_{j=1,...8} \textrm{Dis}_r(P_{i^0},Q^{k^0}_j)$. If $j^*= i^0$, then the classification is successful.
The classification accuracy is computed as the number of cones $P_i$ correctly classified over the total number (i.e., 8).

Since the dataset includes $10$ objects per class, we can repeat the above procedure $10$ times, each time considering a different object as the test cone $P_i$ and building the training cone on the remaining objects as explained above. These $10$ different experiments are graphically represented in Fig.~\ref{fig:exp}.
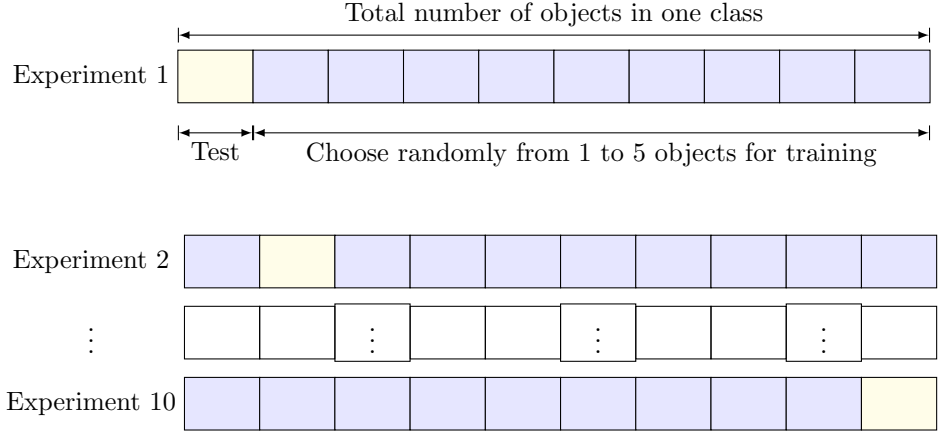
\begin{figure}[t]
\centering 
{\scriptsize
\begin{tikzpicture}
    \matrix (M1) [matrix of nodes, nodes={minimum height=7mm, minimum width=1cm, outer sep=0, anchor=center, draw}, column 1/.style={nodes={draw=none}, minimum width=2cm}, row sep=2mm, column sep=-\pgflinewidth, nodes in empty cells, e/.style={fill=yellow!10}, t/.style={fill=blue!10} ] {
        Experiment 1 & |[e]| & |[t]| & |[t]| & |[t]| & |[t]| & |[t]| & |[t]| & |[t]| & |[t]| & |[t]| \\
    };
    \draw (M1-1-2.north west) ++(0,2mm) coordinate (LT) edge[|<->|, >=latex] node[above]{Total number of objects in one class} (LT-|M1-1-11.north east);
    \draw[|<->|, >=latex] (M1-1-3.south west) ++(0,-4mm) coordinate (LB1) -- (LB1-|M1-1-11.south east) node[midway, below]{Choose randomly from 1 to 5 objects for training};
    \draw[|<->|, >=latex] (M1-1-2.south west) ++(0,-4mm) coordinate (LB2) -- (LB2-|M1-1-2.south east) node[midway, below]{Test};
    \matrix (M2) [matrix of nodes, nodes={minimum height=7mm, minimum width=1cm, outer sep=0, anchor=center, draw}, column 1/.style={nodes={draw=none}, minimum width=2cm}, row sep=2mm, column sep=-\pgflinewidth, nodes in empty cells, e/.style={fill=yellow!10}, t/.style={fill=blue!10}, below=of M1, yshift=-0.5cm] {
        Experiment 2 & |[t]| & |[e]| & |[t]| & |[t]| & |[t]| & |[t]| & |[t]| & |[t]| & |[t]| & |[t]| \\
      \vdots    & & & \node {\vdots}; & & & \node {\vdots}; & & & \node {\vdots}; & \\
        Experiment 10 & |[t]| & |[t]| & |[t]| & |[t]| & |[t]| & |[t]| & |[t]| & |[t]| & |[t]| & |[e]| \\
    };
\end{tikzpicture}
}
\caption{Set up of the numerical experiments.}\label{fig:exp}
\end{figure}

The performance of the proposed approach for image set classification is presented in Fig.~\ref{fig:CNN_Dis}, which reports the accuracy across the ten experiments considering the dissimilarity measures $\Dis_r(P,Q)$ with $r = 1, 2,$ and $\infty$. 
The horizontal axis represents the number of objects used to build the training cone for each class. These figures show that our approach (Algorithm~\ref{CP} with iterates defined by applying \texttt{fmicon} to the master program~\eqref{master}) could correctly classify $7$ classes out of $8$ on average for all the training cone configurations, that is, varying $k$ from $1$ to $5$. The performance is slightly better with a number of objects equal to $4$ or $5$, as more experiments correctly classified all classes. Regarding the use of a particular norm in the definition of the dissimilarity measure, we find the results comparable. The use of the $\ell^1$-norm leads to slightly better average results when the maximum number of objects is used, however, the $l^\infty$ norm shows better value when only one object is considered, i.e., in the \emph{Few-Shot Learning} paradigm, where classification must be performed under limited data conditions.

\begin{figure}[t]
\centering
\subfigure[$\Dis_1(P,Q)$]
{
\includegraphics[scale=0.26]{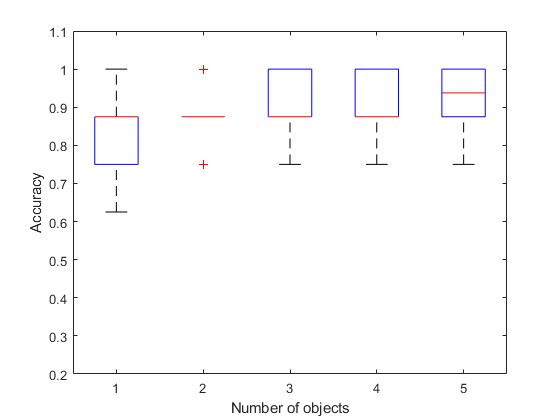}
\label{fig:CNN_a}
}
\subfigure [$\Dis_2(P,Q)$]{
\includegraphics[scale=0.26]{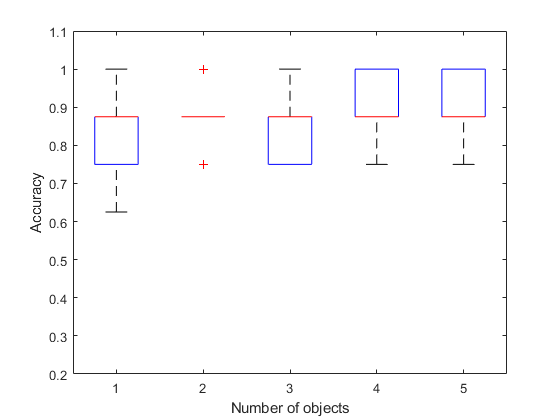}
\label{fig:CNN_aa}
}
\subfigure[$\Dis_\infty(P,Q)$]{
\includegraphics[scale=0.26]{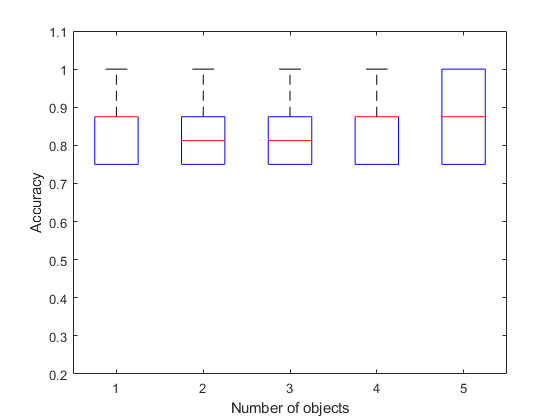}
\label{fig:CNN_aaa}
}
\caption{Performance of the proposed approach using the dissimilarity measures $\Dis_r(P,Q)$ with $r = 1,2 $ and $=\infty$.}\label{fig:CNN_Dis} 
\end{figure}

We also report in Fig.~\ref{fig:CPUtime} the computational burden required for computing the dissimilarity measure. In particular, we compute the CPU time for solving the subproblems to get the angles $\Theta(P_{ i^0},Q^{k^0}_j)$ and $\Theta(Q^{k^0}_j,P_{i^0})$.  
To compute these angles, we employed Algorithm~\ref{CP} under the setting of Theorem~\ref{theo:CP2}. This means that the master problem is only solved to stationarity by using \texttt{fmincon}. Algorithm~\ref{CP} was initialized with the result provided by the heuristic depicted in Algorithm~\ref{Heur}. 
 We compute the mean value of the CPU time necessary to classify $P_i$ for all $i$ and with a fixed $k$.  The computational time to extract the features from the image is not considered here. As expected, the computational time increases as the training cone is built when considering more objects. However, the CPU time is smaller than 30 seconds for the larger case.  

 \begin{figure}[htb]
    \centering
    \includegraphics[width=0.35\linewidth]{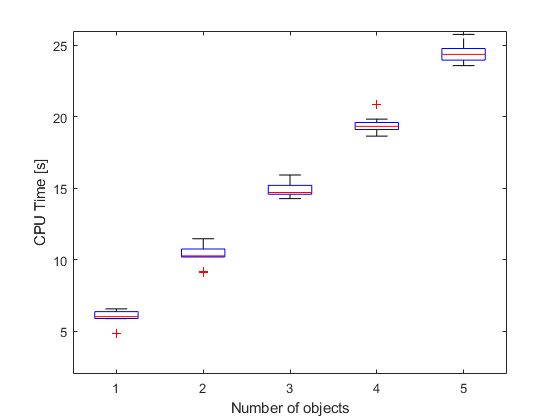}
    \caption{Average CPU time for computing the angles $\Theta(P,Q)$ and $\Theta(Q,P)$. }
    \label{fig:CPUtime}
\end{figure}

It is also interesting to understand which classes are most difficult to classify using our approach. Table~\ref{tab:AccPerClass} shows the percentage of each class that was correctly classified across $10$ experiments, while an increasing number of objects was used to build the training cones. It is evident from these values that distinguishing between cows, dogs, and horses is challenging for our approach. However, the information provided by the additional objects can improve the accuracy for these critical classes. 
\begin{center}
    \begin{table}[htpb]
        \centering
        \scriptsize
        \begin{tabular}{|c|c|c|c|c|c|c|c|c|}
          \hline
             & apple & car & cow & cups & dog & horse & pear & tomato \\ \hline
 $k=1$ & 90  & 100 & 50 & 100 & 70 & 60& 100 & 100\\ \hline
$k=2$  &  100 & 100 & 60 & 100 & 60 & 90 & 100 & 100\\ \hline
$k=3$ & 100 & 100 & 50 & 100  & 50 & 90 & 100 & 100\\ \hline
$k=4$ & 100 & 100 & 60 & 100 & 70 & 90 & 100 & 100\\ \hline
$k=5$ & 100& 100  & 70 & 100 & 70 &90 &100&100\\ \hline
        \end{tabular}
        \caption{Percentage of successful classification per class over the $10$ experiments.}
        \label{tab:AccPerClass}
    \end{table}
\end{center}

\section{Concluding remarks}\label{sec:conclusion}
In this work, we introduced a novel dissimilarity measure between convex cones grounded in the concept of max-min angles, offering a geometric perspective closely related to the Pompeiu-Hausdorff distance. Through a rigorous mathematical and algorithmic framework, we explored several cone configurations where the measure admits simplified or analytic forms. For cones that are linear subspaces of equal dimension, we showed that our measure coincides with the spectral distance, allowing for efficient computation via singular value decomposition. For revolution cones, we derived a closed-form expression for the measure, further illustrating its interpretability and computational tractability in structured settings.

In the more challenging case of polyhedral cones, computing the measure involves minimizing a nonsmooth convex function over a nonconvex feasible set. To face the challenge, we revisited Kelley’s cutting-plane method and proposed a local-solution variant that leverages standard nonlinear programming solvers to tackle the difficult nonconvex master program. 
Our theoretical developments were complemented by a convergence analysis and extensive numerical experiments, including a practical application to image-set classification under few-shot learning conditions. The results highlight the effectiveness and practicality of our approach, with the proposed measure successfully distinguishing between image classes in data-scarce scenarios. In particular, for the considered image-set classification problem, our approach could correctly classify
7 classes out of 8 on average for all the training cone configurations.

Overall, this work contributes a robust and versatile tool for comparing convex cones, with potential implications for geometric data analysis, computer vision, and machine learning tasks where cone-based representations are relevant. 
Future research may explore further global and local optimization strategies for computing the proposed dissimilarity measure in structured settings. In particular, extending the current numerical framework to handle broader classes of cones and designing specialized algorithms could enhance both accuracy and scalability. Furthermore, given the flexibility and interpretability of cone comparisons, our measure lays a solid foundation for further exploration and opens promising avenues for applications in machine learning, especially in tasks involving geometric representations, few-shot learning, and structured data analysis.

\end{document}